\numberwithin{equation}{section}
\newtheorem{theorem}{Theorem}[section]
\newtheorem{corollary}[theorem]{Corollary}
\newtheorem{lemma}[theorem]{Lemma}
\newtheorem{proposition}[theorem]{Proposition}
\theoremstyle{definition}
\newtheorem{definition}[theorem]{Definition}
\newtheorem{remark}[theorem]{Remark}
\newcommand{\Hom}{\operatorname{Hom}}
\newcommand{\Ker}{\operatorname{Ker}}
\newcommand{\A}{\mathbf{A}}
\newcommand{\C}{\mathbf{C}}
\newcommand{\Q}{\mathbf{Q}}
\newcommand{\Z}{\mathbf{Z}}
\newcommand{\g}{\mathfrak{g}}
\title[Higher Representation Theory and Schur-Weyl Duality]
{Higher Representation Theory and \\
Quantum Affine Schur-Weyl Duality}
\author[Seok-Jin Kang]
{Seok-Jin Kang}
\address{Department of Mathematical Sciences
         and
         Research Institute of Mathematics \\
         Seoul National University \\ 599 Gwanak-Ro \\Seoul 151-747, Korea}
         \email{sjkang@snu.ac.kr}
\thanks{This work was supported by NRF Grant
\#2013-035155 and NRF Grant \#2013-055408}
\keywords{2-representation theory, Schur-Weyl duality,
Khovanov-Lauda-Rouquier algebra, quantum group}
\subjclass[2010] {17B37, 16E99}
\begin{document}

\begin{abstract}

In this article, we explain the main philosophy of 2-representation
theory and quantum affine Schur-Weyl duality. The
Khovanov-Lauda-Rouquier algebras play a central role in both themes.

\end{abstract}

\maketitle

\section*{Introduction}

The {\it Khovanov-Lauda-Rouquier algebras}, introduced by
Khovanov-Lauda \cite{KL09, KL11} and Rouquier \cite{R08, R11}, are a
family of $\Z$-graded algebras that provide a fundamental framework
for {\it 2-representation theory} and {\it quantum affine Schur-Weyl
duality}.

Let $H_{k}(\zeta)$ be the finite Hecke algebra with $\zeta$ a
primitive $n$-th root of unity and let $U_{q}(A_{n-1}^{(1)})$ be the
quantum affine algebra of type $A_{n-1}^{(1)}$. In \cite{LLT},
Lascoux-Leclerc-Thibon discovered a recursive algorithm of computing
Kashiwara's lower global basis(=Lusztig's canonical basis)
(\cite{Kas91, Lus90}) and conjectured that the coefficient
polynomials, when evaluated at $q=1$, give the composition
multiplicities of simple $H_{k}(\zeta)$-modules inside Specht
modules.

In \cite{Ar}, Ariki came up with a proof of the
Lascoux-Leclerc-Thibon conjecture using the idea of {\it
categorification}. More precisely, let $\Lambda$ be a dominant
integral weight associated with the affine Cartan datum of type
$A_{n-1}^{(1)}$ and let $H_{k}^{\Lambda}(\zeta)$ be the
corresponding cyclotomic Hecke algebra. Let
$\text{proj}\,(H_{k}^{\Lambda}(\zeta))$ denote the category of
finitely generated projective $H_{k}^{\Lambda}(\zeta)$-modules and
let $K(\text{proj}\,(H_{k}^{\Lambda}(\zeta)))$ be the Grothendieck
group of $\text{proj}\,(H_{k}^{\Lambda}(\zeta))$. Then Ariki proved
$$\bigoplus_{k=0}^{\infty}
K(\text{proj}\,(H_{k}^{\Lambda}(\zeta)))_{\C} \cong V(\Lambda),$$
where $V(\Lambda)$ is the integrable highest weight module over
$A_{n-1}^{(1)}$. Moreover, he showed that the isomorphism classes of
projective indecomposable modules correspond to the lower global
basis of $V(\Lambda)$ at $q=1$, from which the
Lascoux-Leclerc-Thibon conjecture follows.

The idea of categorification, which was originated from \cite{CF94},
can be explained as follows. In the classical representation theory,
we study the properties of an algebra $A$ that are reflected on
various vector spaces $V$. That is, we investigate various algebra
homomorphisms $\phi: A \rightarrow \text{End}(V)$. We identify $A$
with a category having a single object and its elements as
morphisms. Similarly, we consider $\text{End}(V)$ as a category with
$V$ as its object and linear operators on $V$ as morphisms. Then the
classical representation theory can be understood as the study of
functors from a category to another, whence the {\it
1-representation theory}.

We now {\it categorify} the classical representation theory. Let
$A=\bigoplus_{\alpha \in Q} A_{\alpha}$ be a graded algebra and let
$V=\bigoplus_{\lambda \in P} V_{\lambda}$ be a graded $A$-module,
where $Q$ and $P$ are appropriate abelian groups. We construct
2-categories $\mathfrak{A}$ and $\mathfrak{B}$ whose objects are
certain categories $\mathcal{A}_{\alpha}$ $(\alpha \in Q)$ and
$\mathcal{B}_{\lambda}$ $(\lambda \in P)$ such that
$$\bigoplus_{\alpha \in Q} K(\mathcal{A}_{\alpha}) \cong A, \qquad
\bigoplus_{\lambda \in P} K(\mathcal{B}_{\lambda}) \cong V.$$
We now
investigate the properties of 2-functors $R :\mathfrak{A}
\rightarrow \mathfrak{B}$. That is, by categorifying the classical
representation theory, we obtain the {\it 2-representation theory},
the study of 2-functors from a 2-category to another.

So far, one of the most interesting developments in 2-representation
theory is the one via Khovanov-Lauda-Rouquier algebras. The
Khovanov-Lauda-Rouquier algebras categorify the negative half of
quantum groups associated with {\it all} symmetrizable Cartan datum
\cite{KL09, KL11, R08, R11}. Moreover, the cyclotomic
Khovanov-Lauda-Rouquier algebras give a categorification of {\it
all} integrable highest weight modules \cite{KKas12}. Hence
Khovanov-Lauda-Rouquier's and Kang-Kashiwara's categorification
theorems provide a vast generalization of Ariki's categorification
theorem. (See also \cite{Web}.)

When the Cartan datum is symmetric, as was conjectured by
Khovanov-Lauda \cite{KL11}, Varagnolo-Vasserot proved that the
isomorphism classes of simple modules (respectively, projective
indecomposable modules) correspond to upper global basis(=dual
canonical basis) (respectively, lower global basis) \cite{VV09}.
However, when the Cartan datum is not symmetric, the above
statements do not hold in general. It is a very interesting problem
to characterize the {\it perfect basis} and {\it dual perfect basis}
that correspond to simple modules and projective indecomposable
modules, respectively.

On the other hand, the Khovanov-Lauda-Rouquier algebras can be
viewed as a huge generalization of affine Hecke algebras in the
context of {\it Schur-Weyl duality}. The Schur-Weyl duality,
established by Schur and others (see, for example, \cite{S1, S2}),
reveals a deep connection between the representation theories of
symmetric groups and general linear Lie algebras. Let $V=\C^{n}$ be
the vector representation of the general linear Lie algebra $gl_{n}$
and consider the $k$-fold tensor product of $V$. Then $gl_{n}$ acts
on $V^{\otimes k}$ by comultiplication and the symmetric group
$\Sigma_{k}$ acts on $V^{\otimes k}$ (from the right) by place
permutation. Clearly, these actions commute with each other. The
Schur-Weyl duality states that there exists a surjective algebra
homomorphism
$$\phi_{k}: \C \Sigma_{k} \longrightarrow \text{End}_{gl_{n}} (V^{\otimes k}),$$
where $\text{End}_{gl_{n}} (V^{\otimes k})$ denotes the centralizer
algebra of $V^{\otimes k}$ under the $gl_{n}$-action. Moreover,
$\phi_{k}$ is an isomorphism whenever $k \le n$.

The Schur-Weyl duality can be rephrased as follows. There is a
functor $\mathcal {F}$ from the category of finite dimensional
$\Sigma_{k}$-modules to the category of finite dimensional
polynomial representations of $gl_{n}$ given by$$M \longmapsto
V^{\otimes k} \otimes_{\C \Sigma_{k}} M,$$ where $M$ is a finite
dimensional $\Sigma_{k}$-module. The functor $\mathcal{F}$ is called
the {\it Schur-Weyl duality functor} and it defines an equivalence
of categories whenever $k \le n$.

In \cite{Jimbo86}, Jimbo extended the Schur-Weyl duality to the
quantum setting: $\Sigma_{k}$ is replaced by the finite Hecke
algebra $H_{k}$ and $gl_n$ is replaced by the quantum group
$U_{q}(gl_{n})$. Then he obtained the {\it quantum Schur-Weyl
duality functor} from the category of finite dimensional
$H_{k}$-modules to the category of finite dimensional polynomial
representations of $U_{q}(gl_{n})$, which also defines an
equivalence of categories whenever $k \le n$.

In \cite{CP96, Che, GRV94}, Chari-Pressley, Cherednik and
Ginzburg-Reshetikhin-Vasserot constructed a {\it quantum affine
Schur-Weyl duality functor} which relates the category of finite
dimensional representations of affine Hecke algebra
$H_{k}^{\text{aff}}$ and the category of finite dimensional
integrable $U_{q}'(A_{n-1}^{(1)})$-modules. The main ingredients of
their constructions are (i) the fundamental representation
$V(\varpi_{1})$, (ii) the $R$-matrices on the tensor products of
$V(\varpi_{1})$ satisfying the Yang-Baxter equation, (iii) the
intertwiners in $H_{k}^{\text{aff}}$ satisfying the braid relations.

Using Khovanov-Lauda-Rouquier algebras, one can construct  quantum
affine Schur-Weyl duality functors in much more generality. In
\cite{KKasKim1}, Kang, Kashiwara and Kim constructed such a functor
which relates the category of finite dimensional modules over
symmetric Khovanov-Lauda-Rouquuier algebras and the category of
finite dimensional integrable modules over {\it all} quantum affine
algebras. Roughly speaking, the basic idea can be explained as
follows. Using a family of {\it good} modules and $R$-matrices, we
determine a quiver $\Gamma$ and construct a symmetric
Khovanov-Lauda-Rouquier algebra $R^{\Gamma}(\beta)$ $(\beta \in
Q_{+})$. We then construct a $(U_{q}'(\g),
R^{\Gamma}(\beta))$-bimodule $\widehat{V}^{\otimes \beta}$, a
completed tensor power arising from good modules, and define the
quantum affine Schur-Weyl duality functor ${\mathcal F}$  by
$$M \mapsto \widehat{V}^{\otimes \beta} \otimes_{R^{\Gamma}(\beta)} M,$$
where $M$ is an $R^{\Gamma}(\beta)$-module.

Various choices of quantum affine algebras and good modules would
give rise to various quantum affine Schur-Weyl duality functors. We
believe that our general approach will generate a great deal of
exciting developments in the forthcoming years.

\vskip 3mm

{\it Acknowledgements}: The author is very grateful to Masaki
Kashiwara, Myungho Kim and Se-jin Oh for many valuable discussions
and suggestions on this paper.

\vskip 5mm

\section{Quantum groups}

We begin with a brief recollection of representation theory of
quantum groups.

Let $I$ be a finite index set. An integral matrix $A=(a_{ij})_{i,j
\in I}$ is called a {\it symmetrizable Cartan matrix} \, if (i)
$a_{ii}=2$ for all $i \in I$, \, (ii) $a_{ij} \le 0$ for $i \neq j$,
\, (iii) $a_{ij}=0$ if and only if $a_{ji}=0$, \, (iv) there exists
a diagonal matrix $D =\text{diag} (d_{i} \in \Z_{>0} \mid i \in I)$
such that $DA$ is symmetric.

A {\it Cartan datum} consists of :

\  (1) a symmetrizable Cartan matrix $A=(a_{ij})_{i,j \in I}$,

\  (2) a free abelian group $P$ of finite rank, the {\it weight
latice},

\  (3) $\Pi=\{\alpha_{i} \in P  \mid i \in I \}$, the set of {\it
simple roots},

\  (4) $P^{\vee} := \Hom(P, \Z)$, the {\it dual weight lattice},

\  (5) $\Pi^{\vee}=\{h_i \in P^{\vee} \mid i \in I \}$, the set of
{\it simple coroots}

\noindent satisfying the following properties

\hskip 8mm (i) $\langle h_i, \alpha_j \rangle = a_{ij}$ for all $i,
j \in I$,

\hskip 8mm (ii) $\Pi$ is linearly independent,

\hskip 8mm (iii) for each $i \in I$, there exists an element
$\Lambda_{j} \in P$ such that $$\langle h_i, \Lambda_j \rangle =
\delta_{ij} \ \ \text{for all} \ i, j \in I.$$ The $\Lambda_i$'s $(i
\in I)$ are called the {\it fundamental weights}.

We denote by
$$P^{+}:=\{\Lambda \in P \mid \langle h_i, \Lambda \rangle \ge 0 \
\text{for all} \ i \in I \}$$ the set of {\it dominant integral
weights}. The free abelian group $Q:= \bigoplus_{i \in I} \Z
\alpha_i$ is called the {\it root lattice}. Set $Q_{+} = \sum_{i\in
I} \Z_{\ge 0} \alpha_i$. For $\beta = \sum k_i \alpha_i \in Q_{+}$,
we define its {\it height} to be $|\beta|:=\sum k_i$. .

Since $A$ is symmetrizable, there exists a symmetric bilinear form
$( \ , \ )$ on ${\mathfrak h}^{*}:=\Q \otimes_{\Z} P^{\vee}$
satisfying
$$(\alpha_i, \alpha_j) = d_i a_{ij}, \quad \langle h_i, \lambda
\rangle = \dfrac{2 (\alpha_i, \lambda)}{(\alpha_i, \alpha_i)} \quad
\text{for all} \ \  \lambda \in {\mathfrak h}^{*}, \ i, j \in I.$$

Let $q$ be an indeterminate and set $q_i = q^{d_i}$ $(i \in I)$. For
$m, n \in \Z_{\ge 0}$, we define
$$[n]_{i}:= \dfrac{q_{i}^{n} - q_{i}^{-n}}{q_{i} - q_{i}^{-1}},
\qquad  [n]_{i}! := \prod_{k=1}^{n} [k]_{i}. $$ We write
$e_{i}^{(k)}: = e_{i}^k \big/[k]_{i}!$, \ $f_{i}^{(k)}: = f_{i}^k
\big /[k]_{i}!$ \ $(k \in \Z_{\ge 0}, \, i \in I)$ for the {\it
divided powers}.

\begin{definition}
The {\it quantum group} $U_{q}(\g)$ corresponding to a Cartan datum
$(A,P, \Pi,P^{\vee}, \Pi^{\vee})$ is the associative algebra over
$\Q(q)$ generated by the elements $e_i$, $f_i$ $(i \in I)$, \,
$q^{h}$ $(h \in P^{\vee})$ with defining relations
\begin{equation}\label{eq:q-group}
\begin{aligned}
& q^{0}=1, \quad q^{h} q^{h'} = q^{h+h'} \ \ (h, h' \in P^{\vee}), \\
& q^{h} e_i q^{-h} = q^{\langle h, \alpha_i \rangle} e_i, \quad
q^{h} f_i q^{-h} = q^{-\langle h, \alpha_i \rangle} f_i \ \ (h \in
P^{\vee}, \, i\in I), \\
& e_i f_j - f_j e_i = \delta_{ij} \dfrac{K_{i} - K_{i}^{-1}}{q_i -
q_i^{-1}} \ \ (K_i = q^{d_i h_i}, \, i \in I), \\
& \sum_{k=0}^{1-a_{ij}} (-1)^k e_{i}^{(1-a_{ij}-k)} e_{j}
e_{i}^{(k)} = 0 \quad (i \neq j), \\
& \sum_{k=0}^{1-a_{ij}} (-1)^k f_{i}^{(1-a_{ij}-k)} f_{j}
f_{i}^{(k)} = 0 \quad (i \neq j).
\end{aligned}
\end{equation}
\end{definition}
\noindent

Let $U_q^{0}(\g)$ be the subalgebra of $U_q(\g)$ generated by $q^h$
$(h \in P^{\vee})$ and let $U_q^{+}(\g)$ (respectively,
$U_q^{-}(\g)$) be the subalgebra of $U_q(\g)$ generated by $e_i$
(respectively, $f_i$) for all $i \in I$. Then the algebra $U_q(\g)$
has the {\it triangular decomposition}
$$U_q(\g) \cong U_q^{-}(\g) \otimes U_q^{0}(\g) \otimes
U_q^{+}(\g).$$

Let $\A = \Z[q, q^{-1}]$. We define the {\it integral form}
$U_{\A}(\g)$ of $U_q(\g)$ to be the $\A$-subalgebra of $U_q(\g)$
generated by $e_i^{(k)}$, $f_i^{(k)}$, $q^h$ $(i \in I, \, h \in
P^{\vee},\, k \in \Z_{\ge 0})$. Let $U_{\A}^{0}(\g)$ be the
$\A$-subalgebra of $U_q(\g)$ generated by $q^h$ $(h \in P^{\vee})$
and let $U_{\A}^{+}(\g)$ (respectively, $U_{\A}^{-}(\g)$) be the
$\A$-subalgebra of $U_{q}(\g)$ generated by $e_i^{(k)}$
(respectively, $f_i^{(k)}$) $(i \in I, \, k \in \Z_{\ge 0})$. Then
we have
$$U_{\A}(\g) \cong U_{\A}^{-}(\g) \otimes U_{\A}^{0}(\g)
\otimes U_{\A}^{+}(\g).$$

A $U_q(\g)$-module $V$ is called a {\it highest weight module with
highest weight $\Lambda \in P$} if there exists a nonzero vector
$v_{\Lambda}$ in $V$, called the {\it highest weight vector}, such
that

\ \ \ \ (i) $e_{i}\, v_{\Lambda} =0$ for all $i \in I$,

\ \ \ \ (ii) $q^{h}\, v_{\Lambda} = q^{\langle h, \Lambda \rangle}
v_{\Lambda}$ for all $h \in P^{\vee}$,

\ \ \ \ (iii) $V = U_q(\g)\, v_{\Lambda}$.

For each $\Lambda \in P$, there exists a unique irreducible highest
weight module $V(\Lambda)$ with highest weight $\Lambda$. The {\it
integral form} of $V(\Lambda)$ is defined to be
$$V_{\A}(\Lambda):= U_{\A}(\g) \, v_{\Lambda},$$
where $v_{\Lambda}$ is the highest weight vector.

Consider the anti-involution $\phi:U_q(\g) \rightarrow U_q(\g)$
defined by
$$q^h \mapsto q^h, \qquad e_i \mapsto f_i, \qquad f_i \mapsto e_i
\quad (h \in P^{\vee}, \ i \in I).$$ Then there exists a unique
non-degenerate symmetric bilinear form $( \ , \ )$ on $V(\Lambda)$
satisfying
\begin{equation} \label{eq:bilinear form}
(v_{\Lambda}, v_{\Lambda})=1, \quad (x \,u , v) = (u, \phi(x) \, v)
\quad \text{for all} \ x \in U_q(\g), \, u, v \in V(\Lambda).
\end{equation}
The {\it dual} of $V_{\A}(\Lambda)$ is defined to be
$$V_{\A}(\Lambda)^{\vee}:= \{v \in V(\Lambda) \mid \, (u, v) \in \A
\ \ \text{for all} \ u \in V_{\A}(\Lambda) \}.$$ Note that
$V_{\A}(\Lambda)^{\vee}_{\lambda} =
\Hom_{\A}(V_{\A}(\Lambda)_{\lambda}, \A)$ for all $\lambda \in P$.

The {\it Category} ${\mathcal O}_{\text{int}}$ consists of
$U_q(\g)$-modules $M$ such that

(i) $M=\bigoplus_{\mu \in P}M_{\mu}$, where $M_{\mu}:=\{m \in M \mid
\, q^h \, m = q^{\langle h, \mu \rangle} \, m \ \text{for all} \ h
\in P^{\vee} \}, $

(ii) $e_i, f_i\, (i\in I)$ are locally nilpotent on $M$,

(iii) there exist finitely many elements $\lambda_{1}, \ldots,
\lambda_{s} \in P$ such that $$\text{wt}(M):= \{ \mu \in P \mid \,
M_{\mu} \neq 0 \} \subset \bigcup_{j=1}^{s} (\lambda_{j} - Q_{+}).$$

The following properties of the category ${\mathcal O}_{\text{int}}$
are well-known. (See, for example, \cite{HK02, Kac, Lus93}.)

\begin{proposition} \hfill

{\rm

(a) The category ${\mathcal O}_{\text{int}}$ is semisimple.

(b) The $U_q(\g)$-module $V(\Lambda)$ with $\Lambda \in P^{+}$
belongs to  ${\mathcal O}_{\text{int}}$.

(c) Every simple object in ${\mathcal O}_{\text{int}}$ has the form
$V(\Lambda)$ for some $\Lambda \in P^{+}$.
 }
\end{proposition}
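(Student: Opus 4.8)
The plan is to establish the three parts in the order (b), (c), (a), since the explicit construction of the irreducible modules and the classification of simple objects both feed into the proof of semisimplicity. For (b), I would first observe that the construction of $V(\Lambda)=U_q(\g)\,v_\Lambda$ yields the weight decomposition (i) and forces every weight to lie in $\Lambda-Q_{+}$, so condition (iii) holds with $s=1$ and $\lambda_1=\Lambda$. In particular the weights are bounded above, which already gives local nilpotency of the $e_i$, since $e_i^{\,N}$ raises weights out of $\Lambda-Q_{+}$. The substantive point is the local nilpotency of the $f_i$. I would first show that $f_i^{\,N_i+1}v_\Lambda=0$, where $N_i=\langle h_i,\Lambda\rangle\ge 0$: the elements $e_i,f_i,K_i$ generate a copy of $U_{q_i}(\mathfrak{sl}_2)$, and a direct computation with the relation $e_if_j-f_je_i=\delta_{ij}(K_i-K_i^{-1})/(q_i-q_i^{-1})$ shows that $f_i^{\,N_i+1}v_\Lambda$ is annihilated by every $e_j$; being a maximal vector of weight $\Lambda-(N_i+1)\alpha_i\neq\Lambda$, it generates a proper submodule, which vanishes by irreducibility of $V(\Lambda)$. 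To propagate local nilpotency to all of $V(\Lambda)$, I would consider the subspace of $U_{q_i}(\mathfrak{sl}_2)$-finite vectors and verify, using the quantum Serre relations, that it is a $U_q(\g)$-submodule; since it contains $v_\Lambda$ and $V(\Lambda)$ is irreducible, it is everything.

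For (c), let $M$ be a simple object. By condition (iii) the set $\text{wt}(M)$ lies in a finite union of cones $\lambda_j-Q_{+}$ and hence admits a maximal element $\Lambda$, i.e. a weight with $M_{\Lambda+\alpha_i}=0$ for all $i$. Any nonzero $v\in M_\Lambda$ then satisfies $e_i v=0$ for all $i$, so $v$ is a highest weight vector; as $U_q(\g)\,v$ is a nonzero submodule of the simple module $M$, we get $M=U_q(\g)\,v$, a highest weight module of highest weight $\Lambda$, whence $M\cong V(\Lambda)$ by the uniqueness statement recorded above. Finally, local nilpotency of $f_i$ on $v$ together with the $U_{q_i}(\mathfrak{sl}_2)$-representation theory forces $\langle h_i,\Lambda\rangle\ge 0$ for every $i$, so $\Lambda\in P^{+}$.

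The heart of the matter, and the step I expect to be the main obstacle, is (a). Since $\mathcal{O}_{\text{int}}$ is readily seen to be closed under submodules, quotients and finite direct sums, and since by (c) its simple objects are exactly the $V(\Lambda)$ with $\Lambda\in P^{+}$, it suffices to show that every short exact sequence $0\to V(\Lambda)\to M\to V(\mu)\to 0$ in $\mathcal{O}_{\text{int}}$ splits, i.e. that $\operatorname{Ext}^1\big(V(\mu),V(\Lambda)\big)=0$. The strategy I would follow is to introduce a quantum Casimir operator $\Omega$, acting on every object of $\mathcal{O}_{\text{int}}$ and acting on $V(\Lambda)$ as a scalar $c_\Lambda$ with the property that $c_\Lambda=c_\mu$ implies $(\Lambda,\Lambda+2\rho)=(\mu,\mu+2\rho)$, where $\rho=\sum_i\Lambda_i$. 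If $c_\Lambda\neq c_\mu$, then $\Omega-c_\mu$ is invertible on the submodule $V(\Lambda)$ and zero on the quotient, and its kernel furnishes a $U_q(\g)$-stable complement, so the sequence splits. If instead the extension is nonsplit, a maximal-vector argument shows $\mu<\Lambda$, say $\mu=\Lambda-\beta$ with $\beta\in Q_{+}\setminus\{0\}$, and equality of Casimir scalars is forced.

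The separation is then a short positivity computation, and I would carry it out as follows: $c_\Lambda-c_\mu$ corresponds to $(\Lambda,\Lambda+2\rho)-(\mu,\mu+2\rho)=(\beta,\Lambda+\mu+2\rho)$, which is strictly positive because $\Lambda+\mu+2\rho$ is strictly dominant, so that $(\alpha_i,\Lambda+\mu+2\rho)=d_i\langle h_i,\Lambda+\mu+2\rho\rangle>0$ for every $i$, while $\beta$ is a nonzero sum of simple roots with nonnegative coefficients. This contradicts the equality $c_\Lambda=c_\mu$, so no nonsplit extension exists. An induction on the finitely many maximal weights of a general $M$, available by condition (iii), then shows that $M$ is the sum, hence the direct sum, of its irreducible highest weight submodules $V(\Lambda)$. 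The genuinely technical part of this plan is the construction of $\Omega$ as a well-defined operator on the whole category — in the quantum setting it lives naturally in a completion of $U_q(\g)$ rather than in $U_q(\g)$ itself — together with the verification that it acts by the claimed eigenvalue on each $V(\Lambda)$; once these are in place, the splitting and the induction are routine.
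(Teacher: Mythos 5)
The paper offers no proof of this proposition: it is stated as well known, with a pointer to the references [HK02, Kac, Lus93]. Your outline is essentially the argument given in those sources --- $U_{q_i}(\mathfrak{sl}_2)$-reduction for the integrability of $V(\Lambda)$, a maximal-weight argument for the classification of simple objects, and a quantum Casimir operator (living in a completion of $U_q(\g)$) to separate the highest weights and force splitting --- and it is sound. The one place needing more care than you indicate is the final reduction of (a) to $\operatorname{Ext}^1$-vanishing between simples: objects of ${\mathcal O}_{\text{int}}$ are not assumed to have finite length or finite-dimensional weight spaces, so the standard form of the induction is to show that every weight vector $v$ of $M$ lies in a completely reducible submodule, using that $U_q^{+}(\g)\,v$ is finite dimensional (only finitely many weights of $M$ lie above the weight of $v$, by condition (iii), and each $\beta$-component of $U_q^{+}(\g)$ is finite dimensional) and hence contains maximal vectors generating copies of the $V(\Lambda)$.
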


\vskip 5mm

\section{Khovanov-Lauda-Rouquier algebras}\label{sec:KLR}

Let $\mathbf{k}$ be a field and let $(A, P,\Pi,
P^{\vee},\Pi^{\vee})$ be a Cartan datum.

For each $i \neq j$, set
$$S_{ij}:= \{(p,q) \in \Z_{\ge 0} \times \Z_{\ge 0} \mid \,
(\alpha_i, \alpha_i) p + (\alpha_j, \alpha_j) q = -2(\alpha_i,
\alpha_j) \}.$$ Define a family of polynomials ${\sf Q}=({\sf
Q}_{ij})_{i,j\in I}$ in $\mathbf{k}[u,v]$ by
\begin{equation}\label{eq:Q}
{\sf Q}_{ij}(u,v):= \begin{cases} 0 & \ \text{if} \ i =j, \\
\sum_{(p,q)\in S_{ij}} t_{i,j;p,q}u^p v^q & \ \text{if} \ i \neq j
\end{cases}
\end{equation}
for some $t_{i,j;p,q} \in {\mathbf k}$ such that $t_{i,j;p,q}
=t_{j,i; q,p}$ and $t_{i,j;-a_{ij},0} \in {\mathbf k}^{\times}$. In
particular,
$${\sf Q}_{ii}(u,v)=0, \quad {\sf Q}_{ij}(u,v)= {\sf Q}_{ji}(v,u) \ \ (i \neq j).$$

The symmetric group $\mathfrak{S}_{n}=\langle s_1, s_2, \ldots,
s_{n-1}\rangle$ acts on $I^{n}$ by place permutation, where $s_i$
denotes the transposition $(i, i+1)$.

\begin{definition}\label{def:KLR}
The {\it Khovanov-Lauda-Rouquier algebra} $R(n)$ of degree $n\ge 0$
associated with $(A, \sf{Q})$ is the associative algebra over
$\mathbf{k}$ generated by the elements $e(\nu)$ $(\nu \in I^n)$,
$x_{k}$ $(1 \le k \le n)$, $\tau_{l}$ $(1 \le l \le n-1)$ with
defining relations
\begin{equation}\label{eq:KLR}
\begin{aligned}
& e(\nu) e(\nu') = \delta_{\nu, \nu'} e(\nu), \quad
\sum_{\nu \in  I^n } e(\nu) = 1, \\
& x_{k} x_{l} = x_{l} x_{k}, \quad  x_{k} e(\nu) = e(\nu) x_{k}, \\
& \tau_{l} e(\nu) = e(s_{l}(\nu)) \tau_{l}, \quad \tau_{k} \tau_{l}
=\tau_{l} \tau_{k} \ \ \text{if} \ |k-l|>1, \\
& \tau_{k}^2 e(\nu) = Q_{\nu_{k}, \nu_{k+1}} (x_{k}, x_{k+1})
e(\nu), \\
& (\tau_{k} x_{l} - x_{s_k(l)} \tau_{k}) e(\nu) = \begin{cases}
-e(\nu) \ \ & \text{if} \ l=k, \,\nu_{k} = \nu_{k+1}, \\
e(\nu) \ \ & \text{if} \ l=k+1,\, \nu_{k}=\nu_{k+1}, \\
0 \ \ & \text{otherwise},
\end{cases} \\
& (\tau_{k+1} \tau_{k} \tau_{k+1}-\tau_{k} \tau_{k+1} \tau_{k}) e(\nu)\\
& \quad=\begin{cases} \dfrac{Q_{\nu_{k}, \nu_{k+1}}(x_{k}, x_{k+1})
- Q_{\nu_{k}, \nu_{k+1}}(x_{k+2}, x_{k+1})} {x_{k} - x_{k+2}}e(\nu)
\ \ & \text{if} \
\nu_{k} = \nu_{k+2}, \\
0 \ \ & \text{otherwise}.
\end{cases}
\end{aligned}
\end{equation}
\end{definition}
\noindent
The algebra $R(n)$ has a $\Z$-grading by assigning the
degrees as follows:
$$\text{deg}\, e(\nu)=0, \qquad \text{deg}\, x_{k} e(\nu) =
(\alpha_{\nu_k}, \alpha_{\nu_{k}}), \qquad \text{deg}\, \tau_{l}
e(\nu) = -(\alpha_{\nu_{l}}, \alpha_{\nu_{l+1}}).$$

We denote by $q$ the {\it degree-shift functor} defined by $$(qM)_k
= M_{k-1},$$ where $M=\bigoplus_{k \in \Z} M_{k}$ is a graded
$R(n)$-module. Also there is an algebra involution $\psi:R(n)
\rightarrow R(n)$ given by
\begin{equation}\label{eq:anti}
\begin{aligned}
& e(\nu) \mapsto e(\nu'), \qquad x_{k} \mapsto x_{n-k+1}, \\
& \tau_{l}e(\nu) \mapsto \begin{cases} - \tau_{n-l}\, e(\nu') \ &
\text{if} \ \nu_{l}=\nu_{l+1}, \\
\tau_{n-l} \, e(\nu') \ & \text{if} \ \nu_{l} \neq \nu_{l+1},
\end{cases}
\end{aligned}
\end{equation}
where $\nu=(\nu_1, \nu_2, \ldots, \nu_{n})$ and $\nu'=(\nu_{n},
\ldots, \nu_{2}, \nu_{1})$.

\vskip 2mm

By the embedding $R(m) \otimes R(n) \hookrightarrow R(m+n)$, we may
consider $R(m) \otimes R(n)$ as a subalgebra of $R(m+n)$. For an
$R(m)$-module $M$ and an $R(n)$-module $N$, we define their {\it
convolution product} $M \circ N$ by
\begin{equation}\label{eq:convolution}
M \circ N := R(m+n) \otimes_{R(m) \otimes R(n)} (M \otimes N).
\end{equation}
Since $R(m+n)$ is free over $R(m) \otimes R(n)$ (\cite[Proposition
2.16]{KL09}), the bifunctor $(M,N) \mapsto M \circ N$ is exact in
$M$ and $N$.

For $n \ge 0$ and $\beta \in Q_{+}$ with $|\beta|=n$, set
\begin{equation*} \label{eq:idempotent}
I^{\beta} := \{ \nu=(\nu_1, \ldots, \nu_n) \mid \, \alpha_{\nu_1} +
\cdots + \alpha_{\nu_n} = \beta \}, \qquad e(\beta):= \sum_{\nu \in
I^{\beta}} e(\nu).
\end{equation*}
Then $e(\beta)$ is a central idempotent in $R(n)$.  We also define
\begin{equation*}\label{eq:ebeta}
e(\beta, \alpha_i):=\sum_{\substack{\nu \in I^{\beta+\alpha_i} \\
\nu_{n+1}=i }} e(\nu), \quad e(\alpha_i, \beta):=\sum_{\substack{\nu \in I^{\beta+\alpha_i} \\
\nu_{1}=i }} e(\nu).
\end{equation*}
The algebra
$$R(\beta):=R(n) e(\beta)$$ is called the {\it Khovanov-Lauda-Rouquier
algebra at $\beta$}.

\vskip 2mm

For a $\mathbf{k}$-agebra $R$, we denote by $\text{mod}(R)$
(respectively, $\text{proj}(R)$ and $\text{rep}(R)$) the category of
$R$-modules (respectively, the category of finitely generated
projective $R$-modules and the category of finite dimensional
$R$-modules).

If $R$ is a graded ${\mathbf k}$-algebra, we will use
$\text{Mod}(R)$ (respectively, $\text{Proj}(R)$ and $\text{Rep}(R)$)
for the category of graded $R$-modules (respectively, the category
of finitely generated projective graded $R$-modules and the category
of finite dimensional graded $R$-modules).

\vskip 2mm

For each $i \in I$, define the functors
\begin{equation}\label{eq:EF}
\begin{aligned}
& E_{i} : \text{Mod}\,(R(\beta+\alpha_i)) \longrightarrow \text{Mod}\,(R(\beta)), \\
& F_{i} : \text{Mod}\,(R(\beta)) \longrightarrow
\text{Mod}\,(R(\beta+ \alpha_i))
\end{aligned}
\end{equation}
by
\begin{equation}\label{eq:EFdef}
\begin{aligned}
& E_{i}(N) =  e(\beta, \alpha_i)\,R(\beta+
\alpha_i) \otimes_{R(\beta+\alpha_i)}\, N, \\
& F_{i}(M) = R(\beta+\alpha_i)\, e(\beta, \alpha_i)
\otimes_{R(\beta)} M
\end{aligned}
\end{equation}
for $M \in \text{Mod}\,(R(\beta))$, $N \in
\text{Mod}\,(R(\beta+\alpha_i))$.

By \cite[Proposition 2.16]{KL09}, the functors $E_i$ and $F_i$ are
exact and send finitely generated projective modules to finitely
generated projective modules. Hence \eqref{eq:EF} restricts to the
functors
\begin{equation}\label{eq:EFproj}
\begin{aligned}
& E_{i} : \text{Proj}\,(R(\beta+\alpha_i)) \longrightarrow \text{Proj}\,(R(\beta)), \\
& F_{i} : \text{Proj}\,(R(\beta)) \longrightarrow
\text{Proj}\,(R(\beta+ \alpha_i)).
\end{aligned}
\end{equation}

For $1 \le k < n$, set $b_{k} := \tau_{k} x_{k+1}$ and $b_{k}': =
x_{k+ 1} \tau_{k}$. Let $w_{0}= s_{i_1} \cdots s_{i_r}$ be the
longest element in $S_{n}$ and set
$${\mathbf b}(n):= b_{i_1} \cdots b_{i_r}, \quad
{\mathbf b}'(n):= b'_{i_r} \cdots b'_{i_1}.$$ For each $n \ge 0$, we
define the {\it divided powers} by
$$E_{i}^{(n)}:= {\mathbf b}'(n) E_{i}^n, \qquad F_{i}^{(n)}:=
F_{i}^n {\mathbf b}(n).$$ In \cite{KL09} and \cite{R08},
Khovanov-Lauda and Rouquier proved the following {\it
categorification theorem}.

\begin{theorem}{\rm \cite{KL09, R08}} \label{thm:categorification}

{\rm  There exists an $\A$-algebra isomorphism
$$U_{\A}^{-}(\g) \overset {\sim} \longrightarrow K(\text{Proj}(R)) \
\ \text{given by} \ \ f_{i}^{(n)} \longmapsto [F_{i}^{(n)}] \ \
(i\in I, \, n \ge 0),$$ where $K(\text{Proj}(R)):= \bigoplus_{\beta
\in Q_{+}} K(\text{Proj}(R(\beta)))$. }
\end{theorem}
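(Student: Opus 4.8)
The plan is to first endow $K(\mathrm{Proj}(R))=\bigoplus_{\beta\in Q_+}K(\mathrm{Proj}(R(\beta)))$ with the structure of an $\A$-algebra and then to produce the map in the stated direction, checking that it is a well-defined isomorphism. The $\A$-module structure comes from the grading-shift functor $q$, which acts as multiplication by $q$ on each Grothendieck group; the multiplication is induced by the convolution product $\circ$ of \eqref{eq:convolution}, which is associative (from associativity of induction) and exact in each variable by \cite[Proposition 2.16]{KL09}, with unit $[R(0)]=[\mathbf{k}]$. Under this structure, convolution with the projective module $P_i:=R(\alpha_i)$ realizes the functor $F_i$ on classes, so that $[F_i^{(n)}]$ is the class of the projective module obtained by iterated induction, and the assignment $f_i^{(n)}\mapsto[F_i^{(n)}]$ defines a candidate $\A$-algebra homomorphism $\gamma\colon U_\A^-(\g)\to K(\mathrm{Proj}(R))$.

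To see that $\gamma$ is well defined I would verify that the classes $[F_i^{(n)}]$ satisfy the defining relations of $U_\A^-(\g)$. The divided-power relation $f_i^{(m)}f_i^{(n)}=\binom{m+n}{m}_i f_i^{(m+n)}$ is categorified by a direct-sum decomposition $F_i^{(m)}\circ F_i^{(n)}\cong\bigoplus F_i^{(m+n)}$ with graded multiplicity the Gaussian binomial coefficient, which follows from the structure of the nil-Hecke algebra governing $R(n\alpha_i)$ and from the definition of $E_i^{(n)},F_i^{(n)}$ via the elements $\mathbf{b}(n),\mathbf{b}'(n)$. The quantum Serre relation is the essential categorified identity: one constructs an exact sequence (equivalently, a direct-sum decomposition after appropriate grading shifts) among the convolution products $F_i^{(1-a_{ij}-k)}\circ F_j\circ F_i^{(k)}$ whose alternating Euler characteristic in $K(\mathrm{Proj}(R))$ vanishes, yielding $\sum_{k=0}^{1-a_{ij}}(-1)^k[F_i^{(1-a_{ij}-k)}F_jF_i^{(k)}]=0$. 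Surjectivity of $\gamma$ is then comparatively straightforward: every object of $\mathrm{Proj}(R(\beta))$ is, up to grading shift, a direct summand of an iterated convolution $P_{i_1}\circ\cdots\circ P_{i_n}$, so its class lies in the image.

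The main obstacle is injectivity. Here the plan is to introduce the companion Grothendieck group $K(\mathrm{Rep}(R))=\bigoplus_\beta K(\mathrm{Rep}(R(\beta)))$ of finite-dimensional graded modules together with a bilinear pairing $K(\mathrm{Proj}(R))\times K(\mathrm{Rep}(R))\to\A$ given on classes of modules by the graded dimension of $\Hom_{R(\beta)}(P,M)$ (equivalently, the graded composition multiplicities). Using the exactness and the biadjunction properties of $E_i$ and $F_i$ coming from \eqref{eq:EF}, one computes this pairing and checks that, under $\gamma$, it agrees with Lusztig's nondegenerate bilinear form on $U_\A^-(\g)$. The decisive technical input is the basis theorem for $R(\beta)$: the elements $\tau_w\, x^a\, e(\nu)$, indexed by $w\in\mathfrak{S}_n$, multi-exponents $a$, and $\nu\in I^\beta$, form a $\mathbf{k}$-basis, a fact established through a faithful polynomial representation of $R(n)$. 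This basis yields the exact graded dimensions of the spaces $e(\nu)R(\beta)e(\nu')$ and hence shows that the categorical pairing is nondegenerate with the same graded rank as Lusztig's form. Nondegeneracy on both sides together with surjectivity then forces $\gamma$ to be an isomorphism of $\A$-algebras. I expect the basis theorem and the matching of the two bilinear forms to be the most delicate part of the argument.
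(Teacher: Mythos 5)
The paper does not actually prove this theorem: it is stated as a citation of \cite{KL09, R08} in a survey, so there is no internal proof to compare against. Measured against the argument in those references, your outline follows the standard Khovanov--Lauda strategy (convolution product on $K(\text{Proj}(R))$, divided powers via the nil-Hecke structure of $R(n\alpha_i)$ through the elements ${\mathbf b}(n)$, ${\mathbf b}'(n)$, the basis theorem $\{\tau_w x^a e(\nu)\}$ via a faithful polynomial representation, and the identification of the categorical pairing between $K(\text{Proj}(R))$ and $K(\text{Rep}(R))$ with Lusztig's nondegenerate form to get injectivity). Those parts are essentially right, although in \cite{KL09} the quantum Serre relations are verified in the Grothendieck group through the character map into the quantum shuffle algebra rather than by exhibiting an explicit exact sequence of convolution products; your proposed categorified Serre complex exists but is substantially harder than what the $K_0$-level statement requires.

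The genuine gap is your surjectivity argument. From $P\oplus Q\cong P_{i_1}\circ\cdots\circ P_{i_n}$ you may conclude only that $[P]+[Q]$ lies in the image of $\gamma$, not that $[P]$ does: $K(\text{Proj}(R(\beta)))$ is free on the classes of indecomposable projectives by Krull--Schmidt, and the $\A$-span of the classes of the induced modules $P_{i_1}\circ\cdots\circ P_{i_n}$ is a priori only a full-rank submodule (and even the rank count presupposes knowing the number of simples, which is itself part of what must be proved). In \cite{KL09} surjectivity is one of the delicate steps: it is proved by induction on $|\beta|$, choosing for each simple module $S$ an index $i$ with $\varepsilon_i(S)$ maximal and showing that $[P_S]$ equals $f_i^{(\varepsilon_i(S))}$ applied to a class from lower weight plus terms that are upper-triangular with respect to a suitable order, so that the transition matrix between indecomposable projectives and induced projectives is invertible over $\A$. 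Without an argument of this kind (or an equivalent unimodularity statement for the restriction of the bilinear form to the image), your final step ``nondegeneracy on both sides together with surjectivity forces $\gamma$ to be an isomorphism'' has nothing to stand on, since the surjectivity it invokes has not been established.
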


Thus we have constructed a 2-category $\mathfrak {R}$ such that the
objects are the categories $\text{Proj}\,(R(\beta))$ $(\beta \in
Q_{+})$ and the categories ${\mathcal Hom}
\,(\text{Proj}\,R(\alpha), \text{Proj}\, R(\beta))$ consist of the
monomials $F_{i_1} \cdots F_{i_r}$ $(i_k \in I, \, r \ge 0)$ of
functors satisfying
$$\alpha_{i_1} + \cdots + \alpha_{i_r}=
\begin{cases}
\alpha - \beta \ & \text{if} \ \alpha \ge \beta, \\
\beta - \alpha \ & \text{if} \ \beta \ge \alpha.
\end{cases} $$
The morphisms in ${\mathcal Hom} \,(\text{Proj}\,R(\alpha),
\text{Proj}\, R(\beta))$ are the natural transformations  generated
by $x_i: F_i \rightarrow F_i$, \, $\tau_{ij}: F_i F_j \rightarrow
F_j F_i$ $(i, j \in I)$ satisfying the relations
\begin{equation*}
\begin{aligned}
& \tau_{ij} \circ \tau_{ji} = {\sf Q}_{ij}(F_j \,x_i, x_j F_i), \\
& \tau_{jk} F_i \circ F_{j}\, \tau_{ik} \circ \tau_{ij} F_k -
F_{k}\,\tau_{ij} \circ \tau_{ik} F_{j} \circ F_{i}\, \tau_{jk} \\
& \hskip 5mm = \begin{cases} \dfrac{{\sf Q}_{ij}(x_i F_j, F_j\, x_i)
F_i - F_i {\sf Q}_{ij}(F_j\, x_i, x_j F_i) }{ x_i F_j F_i - F_i
F_j\, x_i } F_i \quad & \text{if} \ \
i=k, \\
0 & \text{otherwise},
\end{cases}\\
& \tau_{ij} \circ x_i F_j - F_j \, x_i \circ \tau_{ij} =
\delta_{ij}, \\
& \tau_{ij} \circ F_i \, x_j - x_j F_i \circ \tau_{ij} =
-\delta_{ij}.
\end{aligned}
\end{equation*}
It is straightforward to verify that $\mathfrak{R}$ satisfies all
the axioms for 2-categories \cite{R08, R11}.

\vskip 2mm

For the later use, we define a functor $\overline{F_i}: \text{Mod}
\, (R(\beta)) \longrightarrow \text{Mod}\, (R(\beta + \alpha_i))$ by
$$\overline{F_i}(M):= R(\beta+\alpha_i) \,
e(\alpha_i, \beta) \otimes_{R(\beta)} M \ \ \text{for} \ i \in I, \,
M \in \text{Mod}\,(R(\beta)).$$ The properties of the functors
$E_i$, $F_i$ and $\overline{F_i}$ $(i \in I)$ are given in the
following proposition.

\begin{proposition}{\rm \cite{KKas12}} \label{prop:EF} \hfill

{\rm

(a) We have an exact sequence in $\text{Mod}\, (R(\beta))$
$$0 \longrightarrow \overline{F_i}\,E_i\, M  \longrightarrow E_i
\overline{F_i}\, M \longrightarrow q^{-(\alpha_i, \alpha_i)} M
\otimes {\mathbf k}[t_i] \longrightarrow 0$$ which is functorial in
$M \in \text{Mod}\, (R(\beta))$.

(b) There exist natural isomorphisms

$$\begin{aligned}
& E_i F_j \overset{\sim} \longrightarrow F_j E_i, \quad E_i
\overline{F_j} \overset{\sim} \longrightarrow \overline{F_j} E_i \ \
\ \text{if} \ i \neq j, \\
& E_i F_i \overset{\sim} \longrightarrow q^{-(\alpha_i, \alpha_i)}
F_i E_i \oplus {\mathbf 1} \otimes {\mathbf k}[t_i] \ \ \text{if} \
i=j,
\end{aligned}
$$
where $t_i$ is an indeterminate of degree $(\alpha_i, \alpha_i)$ and
$${\mathbf 1} \otimes {\mathbf k}[t_i] : \text{Mod}\,(R(\beta))
\rightarrow \text{Mod}\, (R(\beta))$$ is the degree-shift functor
sending $M$ to $M \otimes {\mathbf k}[t_i]$ for $M \in \text{Mod}\,
(R(\beta))$ $(\beta \in Q_{+})$. }
\end{proposition}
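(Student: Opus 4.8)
The plan is to reduce every assertion to a statement about $(R(\gamma),R(\beta))$-bimodules. Since $E_i$, $F_i$ and $\overline{F_i}$ are each defined by tensoring with a bimodule of the form $e(\cdot)\,R(\cdot)\,e(\cdot)$, every composite appearing in the proposition is isomorphic to tensoring $M$ with an explicit bimodule; for example, unwinding \eqref{eq:EFdef} gives
$$E_iF_jM\;\cong\;e(\beta+\alpha_j-\alpha_i,\alpha_i)\,R(\beta+\alpha_j)\,e(\beta,\alpha_j)\otimes_{R(\beta)}M,$$
and $F_jE_iM$, $E_i\overline{F_i}M$, $\overline{F_i}E_iM$ are likewise given by bimodules $e(\cdot)\,R(\beta+\alpha_i)\,e(\cdot)$ tensored with $M$. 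Thus (a) becomes an exact sequence of such bimodules and (b) becomes isomorphisms, respectively a direct-sum decomposition, of such bimodules. Because all the maps I will write down are left multiplications (by elements, or by the crossing $\tau_n$) they automatically commute with the right $R(\beta)$-action, so functoriality and naturality in $M$ are automatic and it suffices to treat the bimodules.

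First I would record the Mackey-type structure of these bimodules. By the freeness of $R(\beta+\alpha_i)$ over $R(\beta)\otimes R(\alpha_i)$ \cite[Proposition 2.16]{KL09}, together with the relations $\tau_ke(\nu)=e(s_k\nu)\tau_k$ and $\tau_k^2e(\nu)={\sf Q}_{\nu_k,\nu_{k+1}}(x_k,x_{k+1})e(\nu)$ from \eqref{eq:KLR}, the bimodule $R(\beta+\alpha_i)\,e(\beta,\alpha_i)$ decomposes, as a right $R(\beta)$-module, according to the position to which the freshly added strand is carried by a minimal-length coset representative of $\mathfrak{S}_{n+1}/\mathfrak{S}_n$, where $n=|\beta|$. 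Imposing the left idempotent, which forces the last strand to carry a prescribed color, then selects exactly the summands compatible with the color of the strand removed by $E_i$. This single computation produces the two terms that appear in (a) and in the $i=j$ case of (b).

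With this decomposition the three cases are handled as follows. For (b) with $i\neq j$ the added $j$-strand can never be the removed $i$-strand: in $E_i\overline{F_j}$ the inserted ($j$, far left) and removed ($i$, far right) strands sit at opposite ends and never cross, so the two composites are canonically identified, while in $E_iF_j$ a single crossing $\tau_n$ exchanges the bases of the two bimodules; in either case the basis theorem shows the crossing map is bijective and no polynomial factor (which would require equal colors through a ${\sf Q}$-term) is created. For (b) with $i=j$ there are two summands: the one where the removed strand is among the original $n$ strands gives $q^{-(\alpha_i,\alpha_i)}F_iE_i$, and the one where the removed strand is the just-added strand gives $\mathbf{1}\otimes\mathbf{k}[t_i]$, the polynomial algebra recording the dot $t_i$ (of degree $(\alpha_i,\alpha_i)$) carried by that strand; here the short filtration splits, yielding the direct sum. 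For (a), $\overline{F_i}$ inserts the $i$-strand at the far left while $E_i$ removes from the far right, and the same two summands appear as the submodule $\overline{F_i}E_iM$ (removed strand original) and the quotient $q^{-(\alpha_i,\alpha_i)}M\otimes\mathbf{k}[t_i]$ (removed strand equal to the inserted strand, which must now be dragged across all $n$ strands). I would write down the inclusion and the projection explicitly and verify exactness term-by-term from the basis theorem.

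The main obstacle is verifying that the maps built from the crossing generators are genuine bimodule homomorphisms, i.e. that they respect the full list of relations in \eqref{eq:KLR} — in particular the quadratic relation for $\tau_k^2$ and the braid-type relation governing $\tau_{k+1}\tau_k\tau_{k+1}-\tau_k\tau_{k+1}\tau_k$ — and that the grading shifts come out exactly as $q^{-(\alpha_i,\alpha_i)}$ from $\deg\tau_le(\nu)=-(\alpha_{\nu_l},\alpha_{\nu_{l+1}})$ and $\deg x_ke(\nu)=(\alpha_{\nu_k},\alpha_{\nu_k})$. The subtlest points are the $i=j$ case of (b) and the whole of (a), where one must identify the contribution of the strand that is added and then immediately removed precisely with the polynomial factor $\mathbf{k}[t_i]$; this forces a careful use of the freeness statement both to isolate that summand and to see that the resulting extension splits in (b) but is non-split in (a).
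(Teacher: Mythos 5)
This survey states Proposition~\ref{prop:EF} without proof, citing \cite{KKas12}, and your sketch is precisely the argument of that source (and of \cite{KL09}): realize each composite as tensoring with an explicit bimodule $e(\cdot)R(\beta+\alpha_i)e(\cdot)$, filter that bimodule by where the freshly added strand is carried using the basis theorem, and identify the two graded pieces, with the $i=j$ splitting in (b) and the non-split sequence in (a) as the delicate points. One concrete warning for the degree bookkeeping you promise to verify: in part (a) the strand added at the far left and removed at the far right must cross all $n$ strands of $\beta$, so the quotient you will actually obtain is $q^{-(\alpha_i,\beta)}M\otimes{\mathbf k}[t_i]$ rather than $q^{-(\alpha_i,\alpha_i)}M\otimes{\mathbf k}[t_i]$; this is consistent with the entry $q^{(\alpha_{i}\mid 2\Lambda-2\beta)}{\mathbf k}[t_i]\otimes M$ in the commutative diagram used in the proof of Theorem~\ref{thm:sl2}, and the exponent in the displayed statement of (a) appears to be a misprint rather than a defect of your argument.
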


\vskip 5mm

\section{Cyclotomic categorification theorem}

Let $\Lambda \in P^{+}$ and let
$$a^{\Lambda}(x_1) := \sum_{\nu \in I^{\beta}} x_{1}^{\langle
h_{\nu_1}, \Lambda \rangle} e(\nu) \ \ \in R(\beta).$$ Then the {\it
cyclotomic Khovanov-Lauda-Rouquier algebra $R^{\Lambda}(\beta)$}
$(\beta \in Q_{+})$ is defined to be the quotient algebra
\begin{equation}\label{def:cycKLR}
R^{\Lambda}(\beta):= R(\beta) \big/ R(\beta) a^{\Lambda}(x_1)
R(\beta).
\end{equation}
We would like to show that the cyclotomic Khovanov-Lauda-Rouquier
algebras provide a categorification of irreducible highest weight
$U_q(\g)$-modules in the category ${\mathcal O}_{\text{int}}$.

\vskip 2mm

For each $i \in I$, define the functors
\begin{equation}\label{eq:EFla}
\begin{aligned}
& E_{i}^{\Lambda}: \text{Mod}(R^{\Lambda}(\beta+\alpha_i))
\longrightarrow \text{Mod}(R^{\Lambda}(\beta)), \\
& F_{i}^{\Lambda}: \text{Mod}(R^{\Lambda}(\beta)) \longrightarrow
\text{Mod}(R^{\Lambda}(\beta+ \alpha_i))
\end{aligned}
\end{equation}
by
\begin{equation}\label{EFladef}
\begin{aligned}
& E_{i}^{\Lambda}(N) = e(\beta, \alpha_i)R^{\Lambda}\,(\beta+
\alpha_i) \otimes_{R^{\Lambda}(\beta+\alpha_i)} N, \\
& F_{i}^{\Lambda}\,(M) = R^{\Lambda}(\beta+\alpha_i)\, e(\beta,
\alpha_i) \otimes_{R^{\Lambda}(\beta)} M
\end{aligned}
\end{equation}
for $M \in \text{Mod}(R^{\Lambda}(\beta))$, $N \in
\text{Mod}(R^{\Lambda}(\beta+\alpha_i))$. However, since
$R^{\Lambda}(\beta+\alpha_i)$ is not free over $R^{\Lambda}(\beta)$,
there is no guarantee that $E_{i}^{\Lambda}$ and $F_{i}^{\Lambda}$
send finitely generated projective modules to finitely generated
projective modules. To prove this, we need to show that
$R^{\Lambda}(\beta+\alpha_i)\, e(\beta, \alpha_i)$ is a projective
right $R^{\Lambda}(\beta)$-module.

\vskip 2mm

Let
\begin{equation*}
\begin{aligned}
F^{\Lambda}& :=R^{\Lambda}(\beta+\alpha_i)\,e(\beta, \alpha_i) \\
& \quad  = \dfrac{R(\beta + \alpha_i)\, e(\beta, \alpha_i)}
{R(\beta+\alpha_i) \, a^{\Lambda}(x_1) R(\beta + \alpha_i)\, e(\beta, \alpha_i)}, \\
K_{0} & := R(\beta+\alpha_i) \, e(\beta, \alpha_i)
\otimes_{R(\beta)} R^{\Lambda}(\beta) \\
& \quad = \dfrac{R(\beta + \alpha_i)\, e(\beta, \alpha_i)}{R(\beta +
\alpha_i)\, a^{\Lambda}(x_1) R(\beta)\, e(\beta,
\alpha_i)}, \\
K_{1} & := R(\beta+\alpha_i) \, e(\alpha_i, \beta)
\otimes_{R(\beta)} R^{\Lambda}(\beta)\\
& \quad = \dfrac{R(\beta + \alpha_i)\, e(\alpha_i, \beta)}{R(\beta +
\alpha_i)\, a^{\Lambda}(x_2) R^{1}(\beta)\, e(\alpha_i, \beta)},
\end{aligned}
\end{equation*}
where $R^{1}(\beta)$ is the subalgebra of $R(\beta+\alpha_i)$
generated by $e(\alpha_i, \nu)$ $(\nu \in I^{\beta})$, $x_{k}$
$(2\le k \le n+1)$, $\tau_{l}$ $(2 \le l \le n)$. Then
$F^{\Lambda}$, $K_{0}$ and $K_{1}$ can be regarded as
$(R(\beta+\alpha_i), R^{\Lambda}(\beta))$-bimodules.

Let $t_i$ be an indeterminate of degree $(\alpha_i, \alpha_i)$. Then
${\mathbf k}[t_i]$ acts on $R(\beta+\alpha_i)\,e(\alpha_i, \beta)$
and  $K_{1}$ from the right by $t_i = x_{1} e(\alpha_i, \beta)$. On
the other hand, ${\mathbf k}[t_i]$ acts on $K_{0}$ and $F^{\Lambda}$
from the right by $t_{i}= x_{n+1} e(\beta, \alpha_i)$. Hence all of
them have a structure of $(R(\beta+\alpha_i), R(\beta) \otimes
{\mathbf k}[t_i])$-bimodules. Moreover, $F^{\Lambda}$, $K_{0}$ and
$K_{1}$ are in fact $(R(\beta+\alpha_i), R^{\Lambda}(\beta) \otimes
{\mathbf k}[t_i])$-bimodules.

\vskip 2mm

In \cite{KL09}, it was shown that $K_{0}$ and $K_{1}$ are finitely
generated projective right $(R^{\Lambda}(\beta) \otimes {\mathbf
k}[t_i])$-modules. Let $\pi: K_{0} \rightarrow F^{\Lambda}$ be the
canonical projection and let $P:K_{1} \rightarrow K_{0}$ be the
right multiplication by $a^{\Lambda}(x_1)\,\tau_{1} \cdots
\tau_{n}$.

\vskip 2mm

The following theorem is one of the main results in \cite{KKas12}.

\begin{theorem} {\rm \cite{KKas12}} \label{thm:exact} \hfill

{\rm  The sequence
\begin{equation}\label{eq:exact}
0 \longrightarrow K_{1} \overset{P} \longrightarrow K_{0}
\overset{\pi} \longrightarrow F^{\Lambda} \longrightarrow 0
\end{equation}
is exact as $(R(\beta+\alpha_i, R^{\Lambda}(\beta) \otimes {\mathbf
k}[t_i])$-bimodules.}
\end{theorem}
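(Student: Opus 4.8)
The plan is to verify exactness at each of the three terms separately, disposing of $\pi$ first, then the middle term, and leaving the injectivity of $P$---the genuine difficulty---for last. Throughout set $\gamma := \beta + \alpha_i$ and $n := |\beta|$, and use the presentations recorded in the text,
\begin{equation*}
F^\Lambda = \frac{R(\gamma)\,e(\beta,\alpha_i)}{R(\gamma)\,a^\Lambda(x_1)\,R(\gamma)\,e(\beta,\alpha_i)}, \qquad K_0 = \frac{R(\gamma)\,e(\beta,\alpha_i)}{R(\gamma)\,a^\Lambda(x_1)\,R(\beta)\,e(\beta,\alpha_i)},
\end{equation*}
where $R(\beta)$ denotes the subalgebra of $R(\gamma)$ acting on the first $n$ strands. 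Since $R(\beta)\subseteq R(\gamma)$, the defining submodule of $K_0$ is contained in that of $F^\Lambda$, so $\pi$ is the evident quotient map and is surjective; this settles exactness at $F^\Lambda$ and identifies $\ker\pi$ with the subquotient $R(\gamma)\,a^\Lambda(x_1)\,R(\gamma)\,e(\beta,\alpha_i)\big/R(\gamma)\,a^\Lambda(x_1)\,R(\beta)\,e(\beta,\alpha_i)$ of $K_0$.

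For the middle term I would first check $\pi\circ P=0$. A representative $\xi\in K_1$ has the form $\overline{r\,e(\alpha_i,\beta)}$, and since $a^\Lambda(x_1)$ commutes with the idempotents one computes $P(\xi)=\overline{r\,a^\Lambda(x_1)\,e(\alpha_i,\beta)\,\tau_1\cdots\tau_n}$; as $a^\Lambda(x_1)\,e(\alpha_i,\beta)\in R(\gamma)\,a^\Lambda(x_1)$, this lies in the defining submodule of $F^\Lambda$, so $\pi(P(\xi))=0$ and $\operatorname{im}P\subseteq\ker\pi$. The reverse inclusion is the assertion that every element of $R(\gamma)\,a^\Lambda(x_1)\,R(\gamma)\,e(\beta,\alpha_i)$ is congruent, modulo $R(\gamma)\,a^\Lambda(x_1)\,R(\beta)\,e(\beta,\alpha_i)$, to one of the form $r\,a^\Lambda(x_1)\,e(\alpha_i,\beta)\,\tau_1\cdots\tau_n$. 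Here I would decompose $R(\gamma)\,e(\beta,\alpha_i)$ according to the position of the $i$-coloured strand, using the minimal coset representatives of $\mathfrak{S}_n$ in $\mathfrak{S}_{n+1}$: the representative leaving the strand in the last position contributes precisely the $R(\beta)$-summand, while the intertwiner relations $(\tau_k x_l - x_{s_k(l)}\tau_k)e(\nu)=\pm\delta\,e(\nu)$ let me move $a^\Lambda(x_1)$ past the $\tau$'s and push the remaining representatives, by induction on their length, onto the one sending the strand all the way to the front, namely $\tau_1\cdots\tau_n$. This yields exactness at $K_0$.

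The main obstacle is the injectivity of $P$, i.e.\ exactness at $K_1$. The key inputs are the results quoted from \cite{KL09} that $K_0$ and $K_1$ are finitely generated projective over $R^\Lambda(\beta)\otimes\mathbf{k}[t_i]$---hence free, and so torsion-free, over $\mathbf{k}[t_i]$. I would factor the right multiplication as $P=\rho_\tau\circ\rho_a$, where $\rho_a$ is right multiplication by $a^\Lambda(x_1)$ and $\rho_\tau$ by $\tau_1\cdots\tau_n$. On $K_1$ one has $a^\Lambda(x_1)\,e(\alpha_i,\beta)=x_1^{\langle h_i,\Lambda\rangle}e(\alpha_i,\beta)=t_i^{\langle h_i,\Lambda\rangle}$, so $\rho_a$ is multiplication by a power of $t_i$ and is injective by torsion-freeness. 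For $\rho_\tau$ I would compose on the other side with right multiplication by $\tau_n\cdots\tau_1$: repeated use of $\tau_k^2 e(\nu)={\sf Q}_{\nu_k,\nu_{k+1}}(x_k,x_{k+1})e(\nu)$ together with the braid and commutation relations collapses $(\tau_1\cdots\tau_n)(\tau_n\cdots\tau_1)e(\alpha_i,\beta)$ to multiplication by a product of the ${\sf Q}_{i,\nu_k}(x_1,x_k)$, up to lower-order terms. Because each ${\sf Q}_{i,j}$ has top $x_1$-degree term $t_{i,j;-a_{ij},0}\,x_1^{-a_{ij}}$ with $t_{i,j;-a_{ij},0}\in\mathbf{k}^{\times}$, this product is a non-zero-divisor in $x_1=t_i$, so the composite is injective on the $\mathbf{k}[t_i]$-free module $K_1$; hence $\rho_\tau$, and therefore $P$, is injective. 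The delicate point, and where I expect to spend the most effort, is controlling the lower-order corrections in the $\tau$-collapse so that the unit leading coefficient genuinely forces injectivity on the cyclotomic quotient, rather than merely on the free algebra $R(\gamma)$.
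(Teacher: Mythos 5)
The survey itself contains no proof of this theorem; it is quoted verbatim from \cite{KKas12}, so your proposal has to be measured against the argument there. Your first two stages are sound and essentially agree with it: surjectivity of $\pi$ is immediate from the inclusion of defining submodules, $\pi\circ P=0$ is the computation you give, and exactness at $K_{0}$ does come from decomposing $R(\beta+\alpha_i)\,e(\beta,\alpha_i)$ over $R(\beta)\otimes\mathbf{k}[x_{n+1}]$ by the $n+1$ minimal coset representatives and inducting on length to push everything onto $\tau_{1}\cdots\tau_{n}$ modulo the $R(\beta)$-summand. Your observation that right multiplication by $a^{\Lambda}(x_1)$ acts on $K_{1}$ as $t_i^{\langle h_i,\Lambda\rangle}$, hence injectively by $\mathbf{k}[t_i]$-torsion-freeness, is also correct.

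The injectivity step, however, has a genuine gap, and it is not just a matter of ``controlling lower-order corrections.'' Your proposed composite is governed on the component $e(\alpha_i,\nu)$ by $\prod_{k=1}^{n}{\sf Q}_{i,\nu_k}(x_1,x_{k+1})$, and since ${\sf Q}_{ii}=0$ this product vanishes identically as soon as the letter $i$ occurs in $\nu$, i.e.\ as soon as $\alpha_i$ occurs in the support of $\beta$. In exactly those cases there is no unit leading coefficient: the entire composite consists of the correction terms you were hoping to treat as negligible. The smallest instance already refutes the strategy: for $\beta=\alpha_i$ the composite is right multiplication by $\tau_1^2\,e(i,i)={\sf Q}_{ii}(x_1,x_2)\,e(i,i)=0$ on a nonzero module $K_{1}$, whereas $P$ is injective there; no argument that deduces injectivity of $P$ from injectivity of this composite can succeed. (A secondary worry: right multiplication by $\tau_1\cdots\tau_n$ alone is not obviously well defined from $K_{1}$ to $K_{0}$, since commuting $a^{\Lambda}(x_2)$ past $\tau_1$ produces divided-difference terms carrying no $a^{\Lambda}$ factor; only the full element $a^{\Lambda}(x_1)\tau_1\cdots\tau_n$ is known to descend.)

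The argument of \cite{KKas12} avoids this by a rank count rather than a leading-coefficient computation. By the Khovanov--Lauda basis theorem, $K_{0}$ and $K_{1}$ are both projective of rank $n+1$ over $R^{\Lambda}(\beta)\otimes\mathbf{k}[t_i]$; the cokernel of $P$ is $F^{\Lambda}=R^{\Lambda}(\beta+\alpha_i)\,e(\beta,\alpha_i)$ by the exactness at $K_{0}$ already established, and it is annihilated by a power of $t_i=x_{n+1}$ because the $x_k$ act nilpotently on cyclotomic quotients (a lemma proved separately and beforehand). Hence $P\otimes_{\mathbf{k}[t_i]}\mathbf{k}(t_i)$ is a surjection between $\mathbf{k}(t_i)$-spaces of the same finite dimension, so it is injective, and the torsion-freeness of $K_{1}$ over $\mathbf{k}[t_i]$ that you correctly identified then forces $P$ itself to be injective. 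Your final step is the right one; the missing input is this global comparison of ranks, not a non-zero-divisor produced by collapsing the $\tau$'s.
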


\vskip 2mm

Hence we get a projective resolution of $F^{\Lambda}$ of length $1$
as a right $R^{\Lambda}(\beta)[t_i]$-module. By the following lemma,
we conclude that $F^{\Lambda}$ is a finitely generated projective
right $R^{\Lambda}(\beta)$-module.

\begin{lemma} {\rm \cite{KKas12}}
{\rm Let $R$ be a ring and let $f(t)$ be a monic polynomial in
$R[t]$ with coefficients in the center of $R$.

If an $R[t]$-module $M$ is annihilated by $f(t)$ and has projective
dimension $\le 1$, then $M$ is projective as an $R$-module. }
\end{lemma}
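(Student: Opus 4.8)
The plan is to compare the homological algebra over $A := R[t]$ with that over $R$. Write $d := \deg f$ and note that, because $f(t)$ is monic with central coefficients, $f(t)$ is a central non-zero-divisor of $A$ and $A/f(t)A$ is free as an $R$-module with basis $1, t, \dots, t^{d-1}$. Since $M$ has projective dimension $\le 1$ over $A$, fix an exact sequence of $A$-modules
$$0 \longrightarrow P_1 \xrightarrow{\ d\ } P_0 \xrightarrow{\ \epsilon\ } M \longrightarrow 0$$
with $P_0, P_1$ projective over $A$. As $A$ is free over $R$, each $P_i$ is projective over $R$; restricting scalars, the displayed sequence is already an $R$-projective resolution, so $\operatorname{pd}_R M \le 1$. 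The whole point is to improve this bound to $0$ using the hypothesis $f(t) M = 0$.

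First I would manufacture a homotopy for multiplication by $f(t)$. Writing $f_X$ for multiplication by $f(t)$ on an $A$-module $X$, the relation $\epsilon f_{P_0} = f_M \epsilon = 0$ shows $\operatorname{im} f_{P_0} \subseteq \ker\epsilon = \operatorname{im} d$; since $d$ is injective there is a unique $A$-linear $h : P_0 \to P_1$ with $dh = f_{P_0}$, and then $hd = f_{P_1}$ because $d$ is injective and $f(t)$ is central. Reducing everything modulo $f(t)$ (i.e. passing to $\overline{P}_i := P_i/f(t)P_i$, which are again projective over $R$ since $A/f(t)A$ is $R$-free), the maps $\overline{d}$ and $\overline{h}$ satisfy $\overline{d}\,\overline{h} = 0 = \overline{h}\,\overline{d}$. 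The key claim is that this produces an exact sequence of $R$-modules
$$0 \longrightarrow M \xrightarrow{\ j\ } \overline{P}_1 \xrightarrow{\ \overline{d}\ } \overline{P}_0 \xrightarrow{\ \overline{\epsilon}\ } M \longrightarrow 0,$$
where $\overline{\epsilon}$ is induced by $\epsilon$ (it factors through $\overline{P}_0$ since $f_M = 0$) and $j(m) := \overline{h(p)}$ for any lift $p \in P_0$ of $m$. I expect this verification to be the main obstacle: one checks that $j$ is well defined and injective and that $\ker\overline{d} = \operatorname{im} j$, each step using that multiplication by the non-zero-divisor $f(t)$ is injective on the projective modules $P_i$ together with the identities $dh = f_{P_0}$ and $hd = f_{P_1}$. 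Equivalently, this four-term sequence is the long exact $\operatorname{Tor}^A_\bullet(A/f(t)A, -)$ sequence of the resolution, using the computation $\operatorname{Tor}^A_1(A/f(t)A, M) \cong M$.

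Granting the four-term sequence, the conclusion is formal. Set $N := \ker\overline{\epsilon} = \operatorname{im}\overline{d}$, so that we obtain short exact sequences $0 \to N \to \overline{P}_0 \to M \to 0$ and $0 \to M \xrightarrow{j} \overline{P}_1 \to N \to 0$ of $R$-modules with $\overline{P}_0, \overline{P}_1$ projective over $R$. From the first sequence $N$ is a first syzygy of $M$ over $R$; since $\operatorname{pd}_R M \le 1$, this forces $N$ to be projective over $R$. But then the second sequence splits, exhibiting $M$ as a direct summand of the projective $R$-module $\overline{P}_1$. Hence $M$ is projective over $R$, as claimed.
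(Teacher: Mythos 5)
Your argument is correct and complete. Note first that the survey states this lemma only with a citation to \cite{KKas12} and reproduces no proof, so there is nothing in the paper itself to match your write-up against; judged on its own terms, your proof works. The two pillars are both sound: (i) restricting a length-one $R[t]$-projective resolution $0\to P_1\xrightarrow{d}P_0\to M\to 0$ along the free extension $R\subset R[t]$ gives $\operatorname{pd}_R M\le 1$; (ii) the null-homotopy $h$ with $dh=f_{P_0}$, $hd=f_{P_1}$ and reduction mod $f$ produce the four-term exact sequence $0\to M\to \overline{P}_1\to\overline{P}_0\to M\to 0$ of projective-over-$R$ middle terms, which is exactly the $\operatorname{Tor}^{R[t]}_\bullet(R[t]/(f),-)$ sequence using $\operatorname{Tor}_1^{R[t]}(R[t]/(f),M)\cong M$. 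The delicate points you flag --- well-definedness and injectivity of $j$, and the identity $hd=f_{P_1}$ --- all reduce to multiplication by $f$ being injective on projective $R[t]$-modules, which holds because a monic central $f$ is a non-zero-divisor on free modules; you say this, and the verifications are routine diagram chases. The endgame (the first syzygy $N$ is projective since $\operatorname{pd}_R M\le 1$, whence $0\to M\to\overline{P}_1\to N\to 0$ splits) is standard. The published proof in \cite{KKas12} is phrased instead as an $\operatorname{Ext}$-vanishing computation, but your Tor/syzygy route is an equally valid and arguably more transparent way to upgrade $\operatorname{pd}_R M\le 1$ to projectivity.
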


\vskip 3mm

Thus we obtain the following important theorem.

\begin{theorem}{\rm \cite{KKas12}} \label{thm:proj} \hfill

{\rm

(a) $R^{\Lambda}(\beta+\alpha_i)\, e(\beta, \alpha_i)$ is a
projective right $R^{\Lambda}(\beta)$-module.

(b) $e(\beta, \alpha_i) R^{\Lambda}(\beta+\alpha_i)$ is a projective
left $R^{\Lambda}$-module.

(c) The functors $E_{i}^{\Lambda}$ and $F_{i}^{\Lambda}$ are exact.

(d) The functors $E_{i}^{\Lambda}$ and $F_{i}^{\Lambda}$ send
finitely generated projective modules to finitely generated
projective modules. }
\end{theorem}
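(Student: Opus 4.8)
The plan is to deduce all four parts from Theorem \ref{thm:exact} together with the lemma above, reducing projectivity to a homological statement. For part (a), the short exact sequence \eqref{eq:exact},
$$0 \longrightarrow K_{1} \overset{P}{\longrightarrow} K_{0} \longrightarrow F^{\Lambda} \longrightarrow 0,$$
exhibits a projective resolution of $F^{\Lambda}$ of length $1$ over $R^{\Lambda}(\beta) \otimes \mathbf{k}[t_i]$, since $K_{0}$ and $K_{1}$ were shown to be finitely generated projective right $(R^{\Lambda}(\beta) \otimes \mathbf{k}[t_i])$-modules. Hence $F^{\Lambda}$ has projective dimension at most $1$ as a right $R^{\Lambda}(\beta)[t_i]$-module. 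To invoke the lemma with $R = R^{\Lambda}(\beta)$ and $t = t_i$, it remains to produce a monic polynomial in $t_i$, with coefficients in the center, that annihilates $F^{\Lambda}$. Here $t_i$ acts as right multiplication by $x_{n+1}\, e(\beta, \alpha_i)$, and the $x$-variables act nilpotently in the cyclotomic quotient (equivalently, $R^{\Lambda}(\beta+\alpha_i)$ is finite-dimensional and positively graded); thus $t_i^{N} = 0$ on $F^{\Lambda}$ for some $N$, so $f(t) = t^{N}$ is the desired polynomial. The lemma then yields that $F^{\Lambda} = R^{\Lambda}(\beta+\alpha_i)\, e(\beta, \alpha_i)$ is projective as a right $R^{\Lambda}(\beta)$-module; finite generation follows because $K_{0}$ is finitely generated over $R^{\Lambda}(\beta)[t_i]$ and $t_i$ acts nilpotently, so $F^{\Lambda}$ is already finitely generated over $R^{\Lambda}(\beta)$. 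This proves (a).

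For part (b) I would transport (a) through the anti-involution of $R(\beta+\alpha_i)$ that fixes each generator $e(\nu)$, $x_{k}$, $\tau_{l}$ and reverses products. Since $a^{\Lambda}(x_1)$ commutes with the idempotents $e(\nu)$ and is therefore fixed by this anti-involution, the two-sided cyclotomic ideal $R(\beta+\alpha_i)\, a^{\Lambda}(x_1)\, R(\beta+\alpha_i)$ is preserved, so the anti-involution descends to $R^{\Lambda}(\beta+\alpha_i)$. It fixes the idempotent $e(\beta, \alpha_i)$ and interchanges left and right modules, carrying the right $R^{\Lambda}(\beta)$-module $R^{\Lambda}(\beta+\alpha_i)\, e(\beta, \alpha_i)$ to the left $R^{\Lambda}(\beta)$-module $e(\beta, \alpha_i)\, R^{\Lambda}(\beta+\alpha_i)$. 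Projectivity is preserved under this correspondence, so (b) follows from (a).

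Parts (c) and (d) are then formal. Up to the canonical identification, $E_{i}^{\Lambda}$ is multiplication by the idempotent $e(\beta, \alpha_i)$, namely $E_{i}^{\Lambda}(N) \cong e(\beta, \alpha_i)\, N$, which is exact and carries the regular module to a direct summand of it; combined with (b), which says $e(\beta, \alpha_i)\, R^{\Lambda}(\beta+\alpha_i)$ is finitely generated projective as a left $R^{\Lambda}(\beta)$-module, this shows $E_{i}^{\Lambda}$ preserves finitely generated projectives. For $F_{i}^{\Lambda}(M) = F^{\Lambda} \otimes_{R^{\Lambda}(\beta)} M$, exactness is immediate from (a), since $F^{\Lambda}$ is projective, hence flat, as a right $R^{\Lambda}(\beta)$-module. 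Finally, $F^{\Lambda} = R^{\Lambda}(\beta+\alpha_i)\, e(\beta, \alpha_i)$ is a direct summand of the left regular module $R^{\Lambda}(\beta+\alpha_i)$, so it is finitely generated projective as a left $R^{\Lambda}(\beta+\alpha_i)$-module; tensoring a finitely generated projective $R^{\Lambda}(\beta)$-module with $F^{\Lambda}$ therefore yields a direct summand of a finite direct sum of copies of $F^{\Lambda}$, which is finitely generated projective over $R^{\Lambda}(\beta+\alpha_i)$.

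The main obstacle is concentrated entirely in part (a): the genuine input is Theorem \ref{thm:exact}, which supplies the length-$1$ projective resolution, and the delicate hypothesis to verify is that $t_i$ satisfies a monic relation, so that the lemma applies. Once projectivity of $F^{\Lambda}$ over $R^{\Lambda}(\beta)$ is secured, the remaining parts are bookkeeping with idempotents, the anti-involution, and the exactness of tensoring against a flat bimodule.
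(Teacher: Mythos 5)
Your proposal follows the paper's own route: Theorem~\ref{thm:exact} supplies the length-one projective resolution of $F^{\Lambda}$ over $R^{\Lambda}(\beta)\otimes\mathbf{k}[t_i]$, the lemma is applied with the monic polynomial $t_i^{N}$ (which annihilates $F^{\Lambda}$ because $x_{n+1}$ acts nilpotently on the cyclotomic quotient, a fact established in \cite{KKas12} and worth citing explicitly rather than asserting), and parts (b)--(d) are deduced via the generator-fixing anti-involution and the usual idempotent/flatness bookkeeping, exactly as in \cite{KKas12}. The argument is correct and essentially identical in structure to the one the paper sketches.
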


\vskip 3mm

\begin{corollary} {\rm \cite{KKas12}} \label{cor:exact} \hfill

{\rm

For all $i \in I$ and $\beta \in Q_{+}$, we have an exact sequence
of $R(\beta+\alpha_i)$-modules
$$0 \longrightarrow q^{(\alpha_i, 2\Lambda - \beta)}
\overline{F_{i}} \, M \longrightarrow F_{i}\,M \longrightarrow
F_{i}^{\Lambda}\,M \longrightarrow 0$$ which is functorial in $M \in
\text{Mod}\,R^{\Lambda}(\beta)$. }
\end{corollary}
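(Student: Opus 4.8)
The plan is to obtain the desired short exact sequence by applying the functor $-\otimes_{R^{\Lambda}(\beta)} M$ to the short exact sequence of $(R(\beta+\alpha_i), R^{\Lambda}(\beta)\otimes {\mathbf k}[t_i])$-bimodules furnished by Theorem \ref{thm:exact}, namely $0 \to K_1 \to K_0 \to F^{\Lambda} \to 0$, regarding each term as a right $R^{\Lambda}(\beta)$-module. First I would record the three identifications of the resulting terms. A given $M \in \text{Mod}\,R^{\Lambda}(\beta)$ is viewed as an $R(\beta)$-module by restriction along the quotient $R(\beta) \twoheadrightarrow R^{\Lambda}(\beta)$, so that $R^{\Lambda}(\beta)\otimes_{R^{\Lambda}(\beta)} M \cong M$ as $R(\beta)$-modules. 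Associativity of the tensor product then yields $K_0 \otimes_{R^{\Lambda}(\beta)} M \cong R(\beta+\alpha_i)\,e(\beta,\alpha_i)\otimes_{R(\beta)} M = F_i\,M$, likewise $K_1 \otimes_{R^{\Lambda}(\beta)} M \cong R(\beta+\alpha_i)\,e(\alpha_i,\beta)\otimes_{R(\beta)} M = \overline{F_i}\,M$, and finally $F^{\Lambda}\otimes_{R^{\Lambda}(\beta)} M = R^{\Lambda}(\beta+\alpha_i)\,e(\beta,\alpha_i)\otimes_{R^{\Lambda}(\beta)} M = F_i^{\Lambda}\,M$. The induced maps are the canonical ones and are manifestly functorial in $M$.

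Second, tensoring a short exact sequence is only right exact, so a priori I only obtain $\overline{F_i}\,M \to F_i\,M \to F_i^{\Lambda}\,M \to 0$. To upgrade this to a short exact sequence for every $M$, I would invoke the long exact $\operatorname{Tor}$-sequence: injectivity on the left is controlled by $\operatorname{Tor}_1^{R^{\Lambda}(\beta)}(F^{\Lambda}, M)$. By Theorem \ref{thm:proj}(a), $F^{\Lambda} = R^{\Lambda}(\beta+\alpha_i)\,e(\beta,\alpha_i)$ is a projective, hence flat, right $R^{\Lambda}(\beta)$-module, so this $\operatorname{Tor}$ group vanishes for all $M$. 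Consequently $K_1 \otimes_{R^{\Lambda}(\beta)} M \to K_0 \otimes_{R^{\Lambda}(\beta)} M$ remains injective and the sequence is short exact. Since the entire construction respects the left $R(\beta+\alpha_i)$-action (we have tensored on the right), it is a sequence of $R(\beta+\alpha_i)$-modules.

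Third, I would account for the $\Z$-grading. The map $P\colon K_1 \to K_0$ is right multiplication by $a^{\Lambda}(x_1)\,\tau_1\cdots\tau_n$, which is not degree-preserving, and the shift $q^{(\alpha_i, 2\Lambda - \beta)}$ in the statement is exactly the correction it requires. On the component $e(\alpha_i,\beta)$ one computes $\deg\,a^{\Lambda}(x_1) = \langle h_i, \Lambda\rangle\,(\alpha_i,\alpha_i) = 2(\alpha_i,\Lambda)$, while dragging the $i$-colored strand past all the strands of $\beta$ via $\tau_1\cdots\tau_n$ contributes $-\sum_{j}(\alpha_i, \alpha_{\nu_j}) = -(\alpha_i, \beta)$; the total degree is $(\alpha_i, 2\Lambda - \beta)$. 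Shifting $K_1$, equivalently $\overline{F_i}\,M$, by $q^{(\alpha_i, 2\Lambda - \beta)}$ turns $P$ into a degree-preserving map, producing precisely the stated sequence.

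The main obstacle is this last step: the bookkeeping of the grading through the definitions of $\overline{F_i}$, $F_i$ and the map $P$, and in particular verifying that the strand-crossing degree and the power $x_1^{\langle h_i, \Lambda\rangle}$ combine to the single shift $q^{(\alpha_i, 2\Lambda - \beta)}$. The homological content, by contrast, reduces cleanly to the flatness supplied by Theorem \ref{thm:proj}(a), so the heart of the argument is already available once Theorem \ref{thm:exact} is granted.
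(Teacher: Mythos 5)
Your proposal is correct and follows exactly the route the paper intends: the corollary is presented as an immediate consequence of Theorem \ref{thm:exact} together with Theorem \ref{thm:proj}(a), obtained by applying $-\otimes_{R^{\Lambda}(\beta)}M$ to the bimodule sequence $0\to K_1\to K_0\to F^{\Lambda}\to 0$, with projectivity (flatness) of $F^{\Lambda}$ guaranteeing left exactness and the degree of $a^{\Lambda}(x_1)\,\tau_1\cdots\tau_n$ accounting for the shift $q^{(\alpha_i,2\Lambda-\beta)}$. Your grading computation and the identifications $K_0\otimes M\cong F_iM$, $K_1\otimes M\cong\overline{F_i}M$, $F^{\Lambda}\otimes M\cong F_i^{\Lambda}M$ are all as in \cite{KKas12}.
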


\vskip 2mm

To complete the construction of cyclotomic categorification, it
remains to show that the adjoint pair $(F_{i}^{\Lambda},
E_{i}^{\Lambda})$ gives an {\it $sl_{2}$-categorification}
introduced by Chuang-Rouquier \cite{CR08}.

\begin{theorem}{\rm \cite{KKas12}} \label{thm:sl2} \hfill

{\rm (a) For $i \neq j$, there exists a  natural isomorphism
$$q^{-(\alpha_i, \alpha_j)} F_{j}^{\Lambda} E_{i}^{\Lambda}
\overset{\sim} \longrightarrow E_{i}^{\Lambda} F_{j}^{\Lambda}.
$$

(b) Let $\lambda = \Lambda - \beta$ $(\beta \in Q_{+})$.

\ \ \ \ (i) If $\langle h_i, \Lambda \rangle \ge 0$, there exists a
natural isomorphism
$$q_{i}^{-2} F_{i}^{\Lambda} E_{i}^{\Lambda} \oplus
\bigoplus_{k=0}^{\langle h_i, \Lambda \rangle -1} q_{i}^{2k}
{\mathbf 1} \overset{\sim} \longrightarrow
E_{i}^{\Lambda}F_{i}^{\Lambda}.$$

\ \ \ \ (ii) If $\langle h_i, \Lambda \rangle \le 0$, there exists a
natural isomorphism
$$q_{i}^{-2} F_{i}^{\Lambda} E_{i}^{\Lambda} \overset{\sim}
\longrightarrow E_{i}^{\Lambda} F_{i}^{\Lambda} \oplus
\bigoplus_{k=0}^{-\langle h_i, \lambda \rangle -1} q_i^{2k-2}
{\mathbf 1}.$$ }
\end{theorem}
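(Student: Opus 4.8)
\emph{Plan.} The plan is to deduce both commutation relations from the non-cyclotomic isomorphisms of Proposition~\ref{prop:EF}, transported across the cyclotomic quotient by the short exact sequence of Corollary~\ref{cor:exact}. The first thing I would record is the basic simplification that $E_i^{\Lambda}$ is nothing but the restriction of the big functor $E_i$ to cyclotomic modules: for $N \in \text{Mod}\,(R^{\Lambda}(\beta+\alpha_i))$ one has $E_i^{\Lambda}(N) = e(\beta,\alpha_i) N = E_i(N)$, because $e(\beta,\alpha_i) N$ is automatically annihilated by $a^{\Lambda}(x_1)$ and hence is already a cyclotomic $R^{\Lambda}(\beta)$-module. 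This is what allows the sequences of Corollary~\ref{cor:exact} to be fed into the isomorphisms of Proposition~\ref{prop:EF}. Throughout, exactness of all functors in sight is guaranteed by Theorem~\ref{thm:proj}.

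For part (a), fix $i \neq j$ and $M \in \text{Mod}\,(R^{\Lambda}(\beta))$. Applying the exact functor $E_i$ to the sequence of Corollary~\ref{cor:exact} for the module $F_j^{\Lambda} M$ and then substituting the degree-preserving isomorphisms $E_i \overline{F_j} \overset{\sim}\longrightarrow \overline{F_j} E_i$ and $E_i F_j \overset{\sim}\longrightarrow F_j E_i$ of Proposition~\ref{prop:EF}(b), I obtain a short exact sequence with middle term $F_j E_i M$, sub-object $q^{(\alpha_j, 2\Lambda-\beta)} \overline{F_j} E_i M$, and cokernel $E_i^{\Lambda} F_j^{\Lambda} M$. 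On the other hand, Corollary~\ref{cor:exact} applied directly to the cyclotomic module $E_i M = E_i^{\Lambda} M \in \text{Mod}\,(R^{\Lambda}(\beta-\alpha_i))$ gives a second short exact sequence with the \emph{same} middle term $F_j E_i M$ and cokernel $F_j^{\Lambda} E_i^{\Lambda} M$, but now with sub-object $q^{(\alpha_j, 2\Lambda-\beta+\alpha_i)} \overline{F_j} E_i M = q^{(\alpha_i,\alpha_j)}\, q^{(\alpha_j,2\Lambda-\beta)} \overline{F_j} E_i M$, the extra factor being forced by replacing $\beta$ with $\beta-\alpha_i$ in the exponent of Corollary~\ref{cor:exact}. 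Comparing the two inclusions, which agree up to exactly this degree shift $q^{(\alpha_i,\alpha_j)}$, identifies the two cokernels and yields the natural isomorphism $q^{-(\alpha_i,\alpha_j)} F_j^{\Lambda} E_i^{\Lambda} \overset{\sim}\longrightarrow E_i^{\Lambda} F_j^{\Lambda}$, functorial in $M$.

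For part (b) I would run the same device with $i = j$. Applying $E_i$ to Corollary~\ref{cor:exact} for $F_i^{\Lambda} M$ and now inserting the diagonal statements of Proposition~\ref{prop:EF}, namely $E_i F_i \overset{\sim}\longrightarrow q^{-(\alpha_i,\alpha_i)} F_i E_i \oplus \mathbf{1} \otimes \mathbf{k}[t_i]$ together with the exact sequence $0 \to \overline{F_i} E_i M \to E_i \overline{F_i} M \to q^{-(\alpha_i,\alpha_i)} M \otimes \mathbf{k}[t_i] \to 0$, expresses $E_i^{\Lambda} F_i^{\Lambda} M$ through $F_i E_i M$ and copies of $M \otimes \mathbf{k}[t_i]$. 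The decisive phenomenon is that after passing to the cyclotomic quotient the action of $t_i$ (realized by $x_1$ on $K_1$ and by $x_{n+1}$ on $K_0$ and $F^{\Lambda}$, cf. the definition of $a^{\Lambda}$) is no longer free: the cyclotomic relation truncates $\mathbf{k}[t_i]$ to a finite-dimensional graded module of length $|\langle h_i,\lambda\rangle|$, with $\lambda = \Lambda - \beta$, whose graded pieces contribute precisely the summands $q_i^{2k} \mathbf{1}$. Since the sequence of Corollary~\ref{cor:exact} for $E_i M$ carries the additional shift $q^{(\alpha_i,\alpha_i)} = q_i^{2}$ relative to that for $M$, the surviving truncated summands attach to the $E_i^{\Lambda} F_i^{\Lambda}$ side when $\langle h_i,\lambda\rangle \ge 0$ and to the $F_i^{\Lambda} E_i^{\Lambda}$ side when $\langle h_i,\lambda\rangle \le 0$; this is exactly the dichotomy (i)/(ii), with the graded multiplicities coming out as $q_i^{2k}\mathbf{1}$ and $q_i^{2k-2}\mathbf{1}$ respectively.

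I expect the truncation analysis in part (b) to be the main obstacle. Concretely, one must determine precisely which quotient of $\mathbf{k}[t_i]$ survives in the cyclotomic setting, verify that the connecting maps split so that the copies of $\mathbf{1}$ occur as genuine direct summands rather than mere subquotients, and match the graded multiplicities $q_i^{2k}$ (resp.\ $q_i^{2k-2}$) on the nose. The splittings can be checked at the level of modules using the exactness and projectivity of Theorem~\ref{thm:proj}, while the delicate bookkeeping of degree shifts is controlled throughout by the exponent $(\alpha_i, 2\Lambda - \beta)$ recorded in Corollary~\ref{cor:exact}; it is this single $\beta$-dependent shift, evaluated at $\beta$ and at $\beta \pm \alpha_i$, that produces the asymmetry responsible for both the shift in part (a) and the two-case structure in part (b).
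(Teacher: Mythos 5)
Your overall strategy---transporting the non-cyclotomic isomorphisms of Proposition~\ref{prop:EF} across the cyclotomic quotient via Theorem~\ref{thm:exact} and Corollary~\ref{cor:exact}, and reading off the discrepancy as a truncation of $\mathbf{k}[t_i]$ governed by the sign of $\langle h_i,\lambda\rangle$---is exactly the route the paper takes. For part (b) your description is a faithful, if informal, version of the paper's argument: the paper assembles the two exact sequences (for $F_iE_i$ versus $F_i^{\Lambda}E_i^{\Lambda}$ and for $E_iF_i$ versus $E_i^{\Lambda}F_i^{\Lambda}$) into a commutative diagram, extracts by diagram chasing a map $A: q^{2(\alpha_i,\Lambda-\beta)}\mathbf{k}[t_i]\otimes R^{\Lambda}(\beta)\to\mathbf{k}[t_i]\otimes R^{\Lambda}(\beta)$, and applies the Snake Lemma to get a four-term exact sequence whose end terms are $\Ker A$ and $\operatorname{Coker} A$; the dichotomy (i)/(ii) is precisely your observation that one of these vanishes and the other is $\bigoplus_{k=0}^{a-1}\mathbf{k}\,t_i^k\otimes R^{\Lambda}(\beta)$ with $a=|\langle h_i,\lambda\rangle|$. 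The remaining issues you flag (the splitting of the resulting short exact sequence, the exact graded multiplicities) are genuinely the content of the detailed proof in \cite{KKas12} and are glossed over in the paper's sketch as well.

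There is, however, a real gap in your part (a). You produce two short exact sequences with the \emph{same} middle term $F_jE_iM$, whose sub-objects are abstractly isomorphic up to the shift $q^{(\alpha_i,\alpha_j)}$, and conclude that the cokernels differ by that shift. This inference is invalid: if the two inclusions have the same image inside $F_jE_iM$, the two quotients are literally equal as graded modules (no shift appears), and if the images differ, nothing follows about the cokernels at all. A sub-object of a fixed graded module cannot ``agree with another up to a degree shift'' in a way that shifts the quotient. The shift $q^{-(\alpha_i,\alpha_j)}$ in the statement must instead be carried by the identification of the \emph{middle} terms: the correct form of the commutation in Proposition~\ref{prop:EF}(b) is $E_iF_j\simeq q^{-(\alpha_i,\alpha_j)}F_jE_i$ for $i\neq j$ (the crossing $\tau$ that moves the added $j$-strand past the removed $i$-strand has degree $-(\alpha_i,\alpha_j)$; the survey's unshifted statement suppresses this). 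Once that shift is inserted, your first sequence reads $0\to q^{(\alpha_j,2\Lambda-\beta)+(\alpha_i,\alpha_j)}\overline{F_j}E_iM\to F_jE_iM\to q^{(\alpha_i,\alpha_j)}E_i^{\Lambda}F_j^{\Lambda}M\to 0$, the two sub-objects now match on the nose, and---after checking that the two inclusions really do have the same image, which still requires an argument---the comparison yields the asserted isomorphism. As written, your bookkeeping would instead force $E_i^{\Lambda}F_j^{\Lambda}\simeq F_j^{\Lambda}E_i^{\Lambda}$ without any shift, contradicting the statement.
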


\begin{proof} \ We will give a very rough sketch of the proof.
The assertion (a) can be proved in a straightforward manner.

To prove (b), note that Theorem \ref{thm:exact} and Corollary
\ref{cor:exact} yield the following commutative diagram.
\begin{equation*}
\begin{array}{c}
\xymatrix{
{} & 0 \ar[d] & 0 \ar[d]  & q_{i}^{-2}M & {} \\
0 \ar[r] & q^{(\alpha_i | 2 \Lambda - \beta)} \overline{F_{i}} E_{i}
M \ar[d] \ar[r] & q_{i}^{-2} F_{i} E_{i} M \ar[d] \ar[r]
\ar[ur]^{\varepsilon} & q_{i}^{-2} F_{i}^{\Lambda}
E_{i}^{\Lambda} M  \ar[d] \ar[r] & 0 \\
0 \ar[r] & q^{(\alpha_{i}| 2 \Lambda - \beta)} E_{i}
\overline{F_{i}} M \ar[d] \ar[r]  &
E_{i} F_{i} M \ar[d] \ar[r] & E_{i}^{\Lambda} F_{i}^{\Lambda} M  \ar[r] & 0 \\
{} & q^{(\alpha_{i} | 2 \Lambda - 2\beta)} {\mathbf k}[t_{i}]
\otimes M \ar[d] \ar[r] &
{\mathbf k}[t_{i}] \otimes M \ar[d] & {} & {} \\
{} & 0 & 0 & {} & {}
  }\end{array}
\end{equation*}

Let $A: q^{2(\alpha_i , \Lambda - \beta)}{\mathbf k}[t_i]\otimes
R^{\Lambda}(\beta) \longrightarrow {\mathbf k}[t_i]\otimes
R^{\Lambda}(\beta)$ be the $R^{\Lambda}(\beta)$-bilinear map given
by chasing the diagram. By a detailed analysis of the above
commutative diagram at the kernel level, the Snake Lemma gives the
following exact sequence of $R^{\Lambda}(\beta)$-bimodules
$$0 \longrightarrow \Ker A \longrightarrow q_{i}^{-2}
F_{i}^{\Lambda} E_{i}^{\Lambda} R^{\Lambda}(\beta) \longrightarrow
E_{i}^{\Lambda} F_{i}^{\Lambda} R^{\Lambda}(\beta) \longrightarrow
\text{Coker} A \longrightarrow 0.$$ If $\langle h_i, \lambda
\rangle\ge 0$, we have $\Ker A=0, \ \ \bigoplus_{k=0}^{a-1} {\mathbf
k}\, t_{i}^k \otimes R^{\Lambda}(\beta) \overset{\sim}
\longrightarrow \text{Coker} A,$ and if $\langle h_i, \lambda
\rangle \le 0$, then $\text{Coker} A =0, \ \ \Ker(A) =
q^{2(\alpha_i| \Lambda - \beta)} \bigoplus_{k=0}^{a-1} {\mathbf k}\,
t_{i}^k \otimes R^{\Lambda}(\beta),$ from which our assertion(b)
follows.
\end{proof}

\vskip 2mm Set
\begin{equation*}
\begin{aligned}
& K(\text{Proj}\,(R^{\Lambda})) := \bigoplus_{\beta \in Q_{+}}
K(\text{Proj}\,R^{\Lambda}(\beta)),\\
& \ K(\text{Rep}\,(R^{\Lambda})) := \bigoplus_{\beta \in Q_{+}}
K(\text{Rep}\,R^{\Lambda}(\beta)).
\end{aligned}
\end{equation*}
We define the endomorphisms ${\sf E}_{i}$ and ${\sf F}_{i}$ on
$K(\text{Proj}\,(R^{\Lambda}))$ by
\begin{equation*}
\begin{aligned}
& {\sf E}_{i}=[q_i^{1 - \langle h_i, \Lambda - \beta \rangle}
E_{i}^{\Lambda}]: K(\text{Proj}\,R^{\Lambda}(\beta + \alpha_i))
\longrightarrow K(\text{Proj}\,R^{\Lambda}(\beta)), \\
& {\sf F}_{i}=[F_{i}^{\Lambda}]: K(\text{Proj}\,R^{\Lambda}(\beta))
\longrightarrow K(\text{Proj}\,R^{\Lambda}(\beta + \alpha_i)).
\end{aligned}
\end{equation*}
On the other hand, we define ${\sf E}_{i}$ and ${\sf F}_{i}$ on
$K(\text{Rep}\,(R^{\Lambda}))$ by
\begin{equation*}
\begin{aligned}
& {\sf E}_{i}=[E_{i}^{\Lambda}]: K(\text{Rep}\,R^{\Lambda}(\beta +
\alpha_i))
\longrightarrow K(\text{Rep}\,R^{\Lambda}(\beta)), \\
& {\sf F}_{i}=[q_i^{1 - \langle h_i, \Lambda - \beta \rangle}
F_{i}^{\Lambda}]: K(\text{Rep}\,R^{\Lambda}(\beta)) \longrightarrow
K(\text{Rep}\,R^{\Lambda}(\beta + \alpha_i)).
\end{aligned}
\end{equation*}

Let ${\sf K}_{i}$ be the endomorphism on
$K(\text{Proj}\,R^{\Lambda}(\beta))$ and
$K(\text{Rep}\,R^{\Lambda}(\beta))$ given by the multiplication by
$q_{i}^{\langle h_i, \Lambda - \beta \rangle}$ for each $\beta \in
Q_{+}$. Then we have
$$[{\sf E}_{i}, {\sf F}_{j}] = \delta_{ij} \dfrac{{\sf K}_{i} - {\sf K}_{i}^{-1}}{q_i -
q_i^{-1}} \ \ \ \text{for} \, \ i, j \in I.$$

Therefore, we obtain the {\it cyclotomic categrification theorem}
for irreducible highest weight $U_q(\g)$-modules in the category
${\mathcal O}_{\text{int}}$.

\vskip 2mm

\begin{theorem} {\rm \cite{KKas12}} \hfill

\vskip 2mm

{\rm For each $\Lambda \in P^{+}$, there exist $U_{\A}(\g)$-module
isomorphisms
$$K(\text{Proj}\,R^{\Lambda}) \overset{\sim} \longrightarrow V_{\A}(\Lambda)
\ \ \text{and} \ \  K(\text{Rep}\,R^{\Lambda}) \overset{\sim}
\longrightarrow V_{\A}(\Lambda)^{\vee}.$$ }
\end{theorem}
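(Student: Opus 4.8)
The plan is to equip $K(\text{Proj}\,R^{\Lambda})$ and $K(\text{Rep}\,R^{\Lambda})$ with the structure of $U_{\A}(\g)$-modules, identify each as a highest weight module of highest weight $\Lambda$, and then match the integral structures. First I would verify the defining relations of $U_{\A}(\g)$ for the operators ${\sf E}_i$, ${\sf F}_i$, ${\sf K}_i$. The Cartan relations relating ${\sf K}_i$ to ${\sf E}_j$ and ${\sf F}_j$ are immediate from the $Q_{+}$-grading, under which $K(\text{Proj}\,R^{\Lambda}(\beta))$ is the weight space of weight $\Lambda - \beta$. The commutator relation ${[{\sf E}_i, {\sf F}_j] = \delta_{ij}({\sf K}_i - {\sf K}_i^{-1})/(q_i - q_i^{-1})}$ is exactly the content of the $sl_2$-categorification of Theorem \ref{thm:sl2}, read off in the Grothendieck group with the chosen normalizations. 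The quantum Serre relations for the ${\sf F}_i$ descend from Theorem \ref{thm:categorification}, since $U_{\A}^{-}(\g) \cong K(\text{Proj}\,R)$ already satisfies them and $F_i^{\Lambda}$ is $F_i$ followed by the exact quotient functor $\text{Mod}\,R(\beta+\alpha_i) \to \text{Mod}\,R^{\Lambda}(\beta+\alpha_i)$; the Serre relations for the ${\sf E}_i$ then follow on a highest weight module from the commutator relation.

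Next I would exhibit the highest weight vector. Since $R^{\Lambda}(0) = {\mathbf k}$, the group $K(\text{Proj}\,R^{\Lambda}(0))$ is free of rank one over $\A$, generated by the class $[{\mathbf k}]$; this class has weight $\Lambda$ and is annihilated by every ${\sf E}_i$, as the target $K(\text{Proj}\,R^{\Lambda}(-\alpha_i))$ vanishes. The cyclotomic relation $a^{\Lambda}(x_1) = 0$, together with Theorem \ref{thm:sl2}(b), forces $({\sf F}_i)^{\langle h_i, \Lambda \rangle + 1}[{\mathbf k}] = 0$ for every $i$; these are precisely the relations that cut $V_{\A}(\Lambda)$ out of the Verma module. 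Because the classes $[F_{i_1}^{\Lambda} \cdots F_{i_n}^{\Lambda}\,{\mathbf k}]$ span $K(\text{Proj}\,R^{\Lambda})$, the module is cyclic on $[{\mathbf k}]$, and I obtain a surjective $U_{\A}(\g)$-homomorphism $\Psi : V_{\A}(\Lambda) \twoheadrightarrow K(\text{Proj}\,R^{\Lambda})$ sending $v_{\Lambda} \mapsto [{\mathbf k}]$.

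The main obstacle is to show that $\Psi$ is injective, i.e.\ that no collapsing occurs over $\A$. I would first extend scalars to $\Q(q)$: the module $K(\text{Proj}\,R^{\Lambda}) \otimes_{\A} \Q(q)$ is a highest weight module satisfying the relations $({\sf F}_i)^{\langle h_i, \Lambda \rangle + 1}[{\mathbf k}] = 0$ and having finite dimensional weight spaces (each $R^{\Lambda}(\beta)$ being finite dimensional), hence it is integrable and lies in ${\mathcal O}_{\text{int}}$. Since ${\mathcal O}_{\text{int}}$ is semisimple and the module is cyclic of highest weight $\Lambda$, it must equal $V(\Lambda)$, so that $\Psi \otimes \Q(q)$ is an isomorphism. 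It then remains to descend to $\A$: the group $K(\text{Proj}\,R^{\Lambda}(\beta))$ is free over $\A$ on the classes of the indecomposable projectives, and I would argue that these map onto an $\A$-basis of $V_{\A}(\Lambda)_{\Lambda - \beta}$, so that $\Psi$ restricts to an isomorphism of $\A$-lattices in each weight space. Ruling out that the integral span of the projective classes is a proper sublattice is the technical heart of the argument.

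Finally, for the dual identification $K(\text{Rep}\,R^{\Lambda}) \cong V_{\A}(\Lambda)^{\vee}$, I would use the pairing $\langle [P], [M] \rangle := \dim_q \Hom_{R^{\Lambda}(\beta)}(P, M)$ between $K(\text{Proj}\,R^{\Lambda}(\beta))$ and $K(\text{Rep}\,R^{\Lambda}(\beta))$, which is perfect because the indecomposable projectives are dual to the simple modules. With the normalizations chosen for ${\sf E}_i$ and ${\sf F}_i$ on $\text{Proj}$ versus $\text{Rep}$ and the biadjunction of $E_i^{\Lambda}$ and $F_i^{\Lambda}$ furnished by Theorem \ref{thm:proj}, these operators become adjoint with respect to the pairing, matching the contravariant form \eqref{eq:bilinear form} that defines $V_{\A}(\Lambda)^{\vee}$. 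Dualizing the isomorphism $\Psi$ through this perfect pairing then yields the second isomorphism.
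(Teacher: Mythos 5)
Your proposal is correct and follows essentially the same route as the paper (and the cited source \cite{KKas12}): it assembles exactly the ingredients prepared in Section 3 --- the commutator relation coming from Theorem \ref{thm:sl2}, exactness and preservation of projectives from Theorem \ref{thm:proj}, the highest weight vector $[R^{\Lambda}(0)]$, semisimplicity of ${\mathcal O}_{\text{int}}$, and the perfect $\text{Proj}$--$\text{Rep}$ pairing matched with the contravariant form \eqref{eq:bilinear form} --- into the standard categorification argument. You also correctly identify the one genuinely nontrivial remaining point, namely that the $\A$-span of the divided-power monomials applied to $[{\mathbf k}]$ exhausts the lattice of projective classes, which is precisely where the proof in \cite{KKas12} does its real work.
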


\vskip 2mm

Therefore, for each $\Lambda \in P^{+}$, we have constructed a
2-category ${\mathfrak R}^{\Lambda}$ consisting of
$\text{Proj}\,(R^{\Lambda}(\beta))$ $(\beta \in Q_{+})$, which gives
an integrable 2-representation ${\mathfrak R}^{\Lambda}$ of
${\mathfrak R}$ in the sense of \cite{R08, R11}. (See also
\cite{Web}.)

\begin{remark}
There are several generalizations of Khovanov-Lauda-Rouquier
algebras and categorification theorems. In \cite{KKO,  KKP, KOP},
the Khovanov-Lauda-Rouquier algebras associated with
Borcherds-Cartan data have been defined and their properties have
been investigated including geometric realization, categorification
and the connection with crystal bases. In \cite{EKL, EL, HW, KKO1,
KKO2, KKT, Wang}, various versions of Khovanov-Lauda-Rouquier {\it
super}-algebras have been introduced and the corresponding {\it
super}-categorifications have been constructed.

\end{remark}

\vskip 5mm

\section{Quantum affine algebras and $R$-matrices}

In this section, we briefly review the finite dimensional
representation theory of quantum affine algebras and the properties
of $R$-matrices (see, for example, \cite{AK, CP94, CP96, Kas02}).

Let $(A, P, \Pi, P^{\vee}, \Pi^{\vee})$ be a Cartan datum of affine
type with $I=\{0,1, \ldots, n\}$ the index set of simple roots. Let
$0\in I$ be the leftmost vertex in the affine Dynkin diagrams given
in \cite[Chapter 4]{Kac}. Set $I_{0} = I \setminus \{0\}$. Take
relatively prime positive integers $c_j$'s and $d_j$'s $(j \in I)$
such that
$$\sum_{j\in I} c_j a_{ji}=0, \quad \sum_{j \in I} a_{ij} d_j =0 \ \
\text{for all} \ i \in I.$$ Then the weight lattice can be written
as $$P=\bigoplus_{i \in I} \Z \Lambda_i \oplus \Z \delta,$$ where
$\delta:=\sum_{i\in I} d_i \alpha_i \in P$. We also define
$c:=\sum_{i\in I} c_i h_i \in P^{\vee}$.

\vskip 2mm

We denote by $\g$ the affine Kac-Moody algebra associated with $(A,
P, P^{\vee}, \Pi, \Pi^{\vee})$ and let $\g_{0}$ be the finite
dimensional simple Lie algebra inside $\g$ generated by $e_i$,
$f_i$, $h_i$ $(i \in I_{0})$. We will write $W$ and $W_{0}$ for the
Weyl group of $\g$ and $\g_{0}$, respectively.

\vskip 2mm

Let $U_q(\g)$ be the corresponding quantum group and let
$U_{q}'(\g)$ be the subalgebra of $U_q(\g)$ generated by $e_i$,
$f_i$, $K_i^{\pm 1}$ $(i \in I)$. The algebra $U_{q}'(\g)$ will be
called the {\it quantum affine algebra}.

Set $P_{\text{cl}}:= P \big/ \Z \delta$ and let $\text{cl}: P
\rightarrow P_{\text{cl}}$ be the canonical projection. Then we have
$$P_{\text{cl}} = \bigoplus_{i \in I} \Z \text{cl}(\Lambda_i) \ \
\text{and} \ \ P_{\text{cl}}^{\vee}:=\Hom_{\Z}(P_{\text{cl}},
\Z)=\bigoplus_{i \in I} \Z h_i.$$

A $U_{q}'(\g)$-module $V$ is {\it integrable} if

\ \ \ (i) $V=\bigoplus_{\lambda \in P_{\text{cl}}} V_{\lambda}$,
where $V_{\lambda}=\{v \in V \mid \, K_i v = q_i^{\langle h_i,
\lambda \rangle} v \ \text{for all} \ i \in I \}$,

\ \ \ (ii) $e_i$, $f_i$ $(i \in I)$ are locally nilpotent on $V$.

\noindent We denote by ${\mathcal C}_{\text{int}}$ the category of
finite dimensional integrable $U_{q}'(\g)$-modules.

\vskip 2mm

Let $M$ be an integrable $U_{q}'(\g)$-module. A weight vector $v \in
M_{\lambda}$ $(\lambda \in P_{\text{cl}})$ is called an {\it
extremal weight vector} if there exists a family of nonzero vectors
$\{v_{w \lambda} \mid \, w \in W \}$ such that
\begin{equation*}
v_{s_i \lambda} = \begin{cases} f_{i}^{(\langle h_i, \lambda
\rangle)}
v_{\lambda} \ \ & \text{if} \ \langle h_i, \lambda \rangle \ge 0, \\
e_{i}^{(-\langle h_i, \lambda \rangle)} v_{\lambda} \ \ & \text{if}
\ \langle h_i, \lambda \rangle \le 0.
\end{cases}
\end{equation*}

Let $P_{\text{cl}}^{0}:=\{\lambda \in P_{\text{cl}} \mid \, \langle
c, \lambda \rangle =0 \}$ and set
$$\varpi_{i}:= \Lambda_i - c_i \Lambda_0 \ \ \text{for} \
i \in I_{0}.$$ Then there exists a unique finite dimensional
integrable $U_{q}'(\g)$-module $V(\varpi_{i})$ satisfying the
following properties:

  (i) all the weights of $V(\varpi_{i})$ are contained in the
convex hull of $W_{0} \, \text{cl}(\varpi_{i})$.

  (ii) $\dim V(\varpi_{i})_{\text{cl}(\varpi_{i})} = 1$,

  (iii) for each $\mu \in W_{0} \, \text{cl}(\varpi_{i})$, there exists an
extremal weight vector of weight $\mu$,

  (iv) $V(\varpi_{i})$ is generated by
$V(\varpi_{i})_{\text{cl}(\varpi_{i})}$ as a $U_{q}'(\g)$-module.

\vskip 2mm

The $U_{q}'(\g)$-module $V(\varpi_{i})$ is called the {\it
fundamental representation of weight $\varpi_{i}$}\, $(i \in
I_{0})$.

Let $M$ be a $U_{q}'(\g)$-module. An involution on $M$ is called a
{\it bar involution} if $\overline{a\, v} = \overline{a} \
\overline{v}$ for all $a \in U_{q}'(\g)$, $v \in M,$ where
$\overline{e_i}=e_{i}$, $\overline{f_i}=f_i$,
$\overline{K_{i}}=K_{i}^{-1}$ $(i \in I)$. A finite
$U_{q}'(\g)$-crystal $B$ is {\it simple}\, if \,(i) $\text{wt}(B)
\subset P_{\text{cl}}^{0}$, \, (ii) there exists $\lambda \in
\text{wt}(B)$ such that $\#(B_{\lambda})=1$, \, (iii) the weight of
every extremal vector of $B$ is contained in $W_{0}\, \lambda$.

A finite dimensional integrable $U_{q}'(\g)$-module $M$ is {\it
good} \, if

\ \ \ (i) $M$ has a bar involution, \,

\ \ \ (ii) $M$ has a crystal basis with simple crystal, \,

\ \ \ (iii) $M$ has a lower global basis.

\vskip 2mm  \noindent For example, all the fundamental
representations $V(\varpi_{i})$ $(i \in I_{0})$ are good. Every good
module is irreducible. For any good module $M$, there exists an
extremal weight vector $v$ of weight $\lambda$ such that $\text{wt}
(U_{q}'(\g) v) \subset \lambda - \sum_{i \in I_{0}} \Z_{\ge 0}
\text{cl}(\alpha_i)$. Such $\lambda$ is called  a {\it dominant
extremal weight} and $v$ is called a {\it dominant extremal weight
vector}.

\vskip 2mm

Take ${\mathbf k}=\overline{\C(q)} \subset \bigcup_{M>0}
\C((q^{1/m}))$. Let $M_{\text{aff}} = {\mathbf k}[z, z^{-1}]
\otimes_{\mathbf k} M$ be the {\it affinization} of $M$. For $v \in
M$ and $k \in \Z$, the action of $U_{q}'(\g)$ on $M_{\text{aff}}$ is
given by
\begin{equation*}
\begin{aligned}
& e_{i} (z^k \otimes v)=\begin{cases} z^{k+1} \otimes e_{0} v \ \
& \text{if} \ i=0, \\
z^k \otimes e_i v \ \ & \text{if} \ i \neq 0,  \end{cases} \\
&  f_{i} (z^k \otimes v)=\begin{cases} z^{k-1} \otimes f_{0} v \ \
& \text{if} \ i=0, \\
z^k \otimes f_i v \ \ & \text{if} \ i \neq 0, \end{cases} \\
& K_{i}^{\pm 1} (z^k \otimes v) = q_{i}^{\pm \langle h_{i},
\text{wt}(v) \rangle} (z^k \otimes v) \ \ (i \in I).
\end{aligned}
\end{equation*}
We define a $U_{q}'(\g)$-module automorphism $z_{M}:M_{\text{aff}}
\rightarrow M_{\text{aff}}$ of weight $\delta$  by
$$z^{k} \otimes v
\mapsto z^{k+1} \otimes v \ \ (v\in M, \, k \in \Z).$$

\vskip 2mm

Let $M_1$, $M_2$ be good $U_{q}'(\g)$-modules and let $u_{1}$,
$u_{2}$ be dominant extremal weight vectors of $M_1$ and $M_2$,
respectively. Set $z_{1} = z_{M_1}$ and $z_{2} = z_{M_2}$. Then
there exists a unique $U_{q}'(\g)$-module homomorphism
\begin{equation*}
R_{M_1, M_2}^{\text{norm}}(z_1, z_2) : (M_{1})_{\text{aff}} \otimes
(M_{2})_{\text{aff}} \longrightarrow {\mathbf k}(z_1, z_2)
\otimes_{{\mathbf k}[z_{1}^{\pm 1}, z_{2}^{\pm 1}]}
(M_{2})_{\text{aff}} \otimes (M_{1})_{\text{aff}}
\end{equation*}
satisfying
\begin{equation*}
\begin{aligned}
&  R_{M_{1}, M_{2}}^{\text{norm}}(u_1 \otimes u_2) =u_2 \otimes
u_1,\\
& \  R_{M_{1}, M_{2}}^{\text{norm}} \circ z_1  = z_1 \circ
R_{M_{1},M_{2}}^{\text{norm}}, \\
& \  R_{M_{1}, M_{2}}^{\text{norm}} \circ z_2  = z_2 \circ R_{M_{1},
M_{2}}^{\text{norm}}.
\end{aligned}
\end{equation*}
The homomorphism  $R_{M_{1}, M_{2}}^{\text{norm}}$ is called the
{\it normalized $R$-matrix of $M_1$ and $M_2$}.

\vskip 2mm

Note that $\text{Im}\, R_{M_{1}, M_{2}}^{\text{norm}} \subset
{\mathbf k}(z_2 \big/ z_1) \otimes_{{\mathbf k}[(z_2 / z_1)^{\pm
1}]} (M_{2})_{\text{aff}} \otimes (M_{1})_{\text{aff}}.$ We denote
by $d_{M_1, M_2}(u) \in {\mathbf k}[u]$ the monic polynomial of the
smallest degree such that
$$\text{Im}\, \left( d_{M_1, M_2}(z_2 \big/ z_1)\, R_{M_{1}, M_{2}}^{\text{norm}} \right) \subset
(M_{2})_{\text{aff}} \otimes (M_{1})_{\text{aff}}.$$ The polynomial
$d_{M_1, M_2}(u)$ is called the {\it denominator} of $R_{M_1,
M_2}^{\text{norm}}$.

The normalized $R$-matrix satisfies the Yang-Baxter equation. That
is, for $U_{q}'(\g)$-modules $M_1$, $M_2$, $M_3$, we have
\begin{equation*}
\begin{aligned}
& (R_{M_2, M_3}^{\text{norm}} \otimes 1) \circ (1 \otimes  R_{M_1,
M_3}^{\text{norm}}) \circ ( R_{M_1, M_2}^{\text{norm}} \otimes 1) \\
&  \quad \quad  =(1 \otimes  R_{M_1, M_2}^{\text{norm}}) \circ
(R_{M_1, M_3}^{\text{norm}} \otimes 1) \circ(1 \otimes R_{M_2,
M_3}^{\text{norm}}).
\end{aligned}
\end{equation*}

\vskip 5mm

\section{Quantum affine Schur-Weyl duality functor}\label{sec:duality
functor}

Let $\{V_{s} \mid s \in {\mathcal S} \}$ be a family of good modules
and let $v_{s}$ be a dominant extremal weight vector in $V_{s}$ with
weight $\lambda_{s}$ $(s \in {\mathcal S})$. Take an index set $J$
endowed with the maps $X:J \rightarrow {\mathbf k}^{\times}$ and
$s:J \rightarrow {\mathcal S}$. For each $i,j \in J$, let
\begin{equation*}
\begin{aligned}
R_{V_{s(i)}, V_{s(j)}}^{\text{norm}} & (z_i, z_j)  :
(V_{s(i)})_{\text{aff}} \otimes (V_{s(j)})_{\text{aff}} \\
& \longrightarrow {\mathbf k}(z_i, z_j) \otimes_{{\mathbf
k}[z_{i}^{\pm 1}, z_{j}^{\pm 1}]} (V_{s(j)})_{\text{aff}} \otimes
(V_{s(i)})_{\text{aff}}
\end{aligned}
\end{equation*}
be the normalized $R$-matrix sending $v_{s(i)} \otimes v_{s(j)}$ to
$v_{s(j)} \otimes v_{s(i)}$.

\vskip 2mm

Let $d_{V_{s(i)}, V_{s(j)}} (z_j / z_i)$ be the denominator of
$R_{V_{s(i)}, V_{s(j)}}^{\text{norm}} (z_i, z_j)$. We define a
quiver $\Gamma^{J}$ as follows.

\ \ \ (i) We take $J$ to be the set of vertices.

\ \  (ii) We put $d_{ij}$ many arrows from $i$ to $j$, where
$d_{ij}$ the order of zero of $d_{V_{s(i)}, V_{s(j)}}(z_j / z_i)$ at
$z_{j} / z_{i} = X(j) / X(i)$.

\vskip 2mm

Define the Cartan matrix $A^{J}=(a_{ij}^{J})_{i,j \in J}$ by
\begin{equation} \label{eq:AJ}
a_{ij}^{J} = \begin{cases} 2 \ \ & \text{if} \ i=j, \\
-d_{ij} - d_{ji} \ \ & \text{if} \ i \neq j.
\end{cases}
\end{equation}
Thus we obtain a symmetric Cartan datum $(A^{J}, P, P^{\vee}, \Pi,
\Pi^{\vee})$ associated with $\Gamma^{J}$.

Set
\begin{equation}\label{eq:QJ}
{\sf Q}_{ij}^{J}(u,v):=\begin{cases} 0 \ \ & \text{if} \ i=j, \\
(u-v)^{d_{ij}} (v-u)^{d_{ji}} \ \ & \text{if} \ i \neq j.
\end{cases}
\end{equation}
We will denote by $R^{J}(\beta)$ $(\beta \in Q_{+})$ the
Khovanov-Lauda-Rouquier algebra associated with $(A^{J}, {\sf
Q}^{J})$.

For each $\nu=(\nu_1, \ldots, \nu_n) \in J^{\beta}$, let
$\widehat{{\mathcal O}}_{{\mathbf T}^n, X(\nu)}
 = {\mathbf k}[[X_1 - X(\nu_1), \ldots, X_n - X(\nu_n)]]$
be the completion of ${\mathcal O}_{{\mathbf T}^n, X(\nu)}$ at
$X(\nu):= (X(\nu_1), \ldots, X(\nu_n))$ and set
$$V_{\nu}:=(V_{s(\nu_1)})_{\text{aff}} \otimes \cdots \otimes
(V_{s(\nu_n)})_{\text{aff}},$$ where $X_{k}=z_{V_{s(\nu_k)}}$ $(k=1,
\ldots, n)$.

We define $$\widehat{V}_{\nu} := \widehat{{\mathcal O}}_{{\mathbf
T}^n, X(\nu)} \otimes_{{\mathbf k}[X_{1}^{\pm 1}, \ldots, X_{n}^{\pm
1}]} V_{\nu} \ \ \text{and} \ \ {\widehat{V}}^{\otimes \beta}:=
\bigoplus_{\nu \in J^{\beta}} \widehat{V}_{\nu} \, e(\nu).$$ The
following proposition is one of the main results of \cite{KKasKim1}.

\begin{proposition} {\rm \cite{KKasKim1}} {\rm The space
$\widehat{V}^{\otimes \beta}$ is a $(U_{q}'(\g),
R^{J}(\beta))$-bimodule.}
\end{proposition}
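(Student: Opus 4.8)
The plan is to put two commuting actions on $\widehat{V}^{\otimes\beta}$ and then verify, on the right, the defining relations of $R^J(\beta)$. The left $U_q'(\g)$-action is the easy half. On each $V_\nu$ the quantum affine algebra acts through the iterated coproduct, and since every spectral automorphism $X_k = z_{V_{s(\nu_k)}}$ is a $U_q'(\g)$-module automorphism, it is central for this action. Hence the $U_q'(\g)$-action is $\mathbf{k}[X_1^{\pm 1},\ldots,X_n^{\pm 1}]$-linear and therefore extends by continuity to the completion $\widehat{V}_\nu = \widehat{\mathcal{O}}_{\mathbf{T}^n, X(\nu)}\otimes_{\mathbf{k}[X_1^{\pm 1},\ldots,X_n^{\pm 1}]} V_\nu$; taking the direct sum over $\nu\in J^\beta$ gives the left action on $\widehat{V}^{\otimes\beta}$.

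For the right action I would let the generators act as follows: $e(\nu)$ is the projection onto the summand $\widehat{V}_\nu\, e(\nu)$; $x_k$ is multiplication by the local parameter at $X(\nu)$ in the $k$-th coordinate (i.e.\ by $X_k - X(\nu_k)$, up to the normalization needed to make the grading and the relations below come out); and $\tau_l$ sends $\widehat{V}_\nu\, e(\nu)$ to $\widehat{V}_{s_l(\nu)}\, e(s_l(\nu))$ by applying, on the adjacent tensor factors $l$ and $l+1$, a renormalized version of $R^{\text{norm}}_{V_{s(\nu_l)}, V_{s(\nu_{l+1})}}$ and the identity elsewhere. The preliminary point is that $R^{\text{norm}}$ a priori takes values in a localization, so one must clear its pole at the base point $X(\nu_{l+1})/X(\nu_l)$: by the very definition of the quiver $\Gamma^J$, the denominator $d_{V_{s(\nu_l)},V_{s(\nu_{l+1})}}$ vanishes there to order $d_{\nu_l\,\nu_{l+1}}$, so after multiplying by the corresponding power of the local parameter the operator becomes regular on $\widehat{V}_\nu$. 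That these operators commute with the left action is immediate: $e(\nu)$ and $x_k$ because the $X_k$ are central, and $\tau_l$ because the normalized $R$-matrix is by construction a $U_q'(\g)$-module homomorphism and the pole-clearing factor is a central scalar.

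It then remains to check the relations of Definition~\ref{def:KLR}. The relations among the $e(\nu)$ and the $x_k$, the intertwining $\tau_l\, e(\nu) = e(s_l(\nu))\,\tau_l$, and the distant commutation $\tau_k\tau_l = \tau_l\tau_k$ for $|k-l|>1$ hold by inspection, since distant $\tau$'s act on disjoint pairs of factors. The three substantive relations come from the three properties of the normalized $R$-matrix recorded in the previous section. First, the quadratic relation $\tau_k^2\, e(\nu) = {\sf Q}^J_{\nu_k,\nu_{k+1}}(x_k,x_{k+1})\,e(\nu)$ follows from the inversion relation $R^{\text{norm}}_{V_j,V_i}(z_j,z_i)\circ R^{\text{norm}}_{V_i,V_j}(z_i,z_j)=\mathrm{id}$ (itself a consequence of the uniqueness in the definition of the normalized $R$-matrix): composing the two renormalized operators replaces the $R$-matrices by the identity and leaves precisely the product of the two pole-clearing factors, which vanish to orders $d_{ij}$ and $d_{ji}$ and thus reproduce ${\sf Q}^J_{ij}(u,v)=(u-v)^{d_{ij}}(v-u)^{d_{ji}}$ in the local coordinates $x_k, x_{k+1}$; in particular for $\nu_k=\nu_{k+1}$ one gets ${\sf Q}^J_{ii}=0$, i.e.\ $\tau_k^2\,e(\nu)=0$. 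Second, the commutation relations between $\tau_l$ and the $x$'s reflect the compatibility $R^{\text{norm}}\circ z_i = z_i\circ R^{\text{norm}}$ with the spectral parameters, the off-diagonal correction $\mp e(\nu)$ in the case $\nu_k=\nu_{k+1}$ expressing the divided-difference behaviour of the renormalized $R$-matrix when the two factors coincide. Third, the braid relation for $\tau_{k+1}\tau_k\tau_{k+1}$ against $\tau_k\tau_{k+1}\tau_k$ is exactly the Yang--Baxter equation for the factors $k,k+1,k+2$, transported to the completion.

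I expect the delicate point to be the quadratic relation: verifying that the denominator vanishes at the base point to order exactly $d_{ij}$ and that the pole-clearing factors, expressed through the local parameters, assemble into ${\sf Q}^J_{ij}$ without a stray unit. This is precisely where the completion is indispensable — passing to $\widehat{\mathcal{O}}_{\mathbf{T}^n, X(\nu)}$ lets one invert the part of the denominator that does not vanish at $X(\nu)$ while retaining, as an honest polynomial factor, exactly the vanishing captured by the arrows of $\Gamma^J$. Tracking the normalizations so that this factor becomes $(x_k-x_{k+1})^{d_{ij}}(x_{k+1}-x_k)^{d_{ji}}$ on the nose, and confirming that the regularized $\tau_l$ genuinely preserves the uncompleted image lattice (so that the operators really land in $\widehat{V}^{\otimes\beta}$), is the core of the verification.
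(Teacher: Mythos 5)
The paper itself states this proposition without proof, as a quoted result of \cite{KKasKim1}, so your proposal must be measured against the construction given there. Your overall architecture is the correct one and agrees with it: the left action extends to the completion because each spectral automorphism $z_{V_{s(\nu_k)}}$ is a $U_q'(\g)$-module automorphism, $e(\nu)$ acts by projection, $x_k$ by the local parameter at $X(\nu_k)$, $\tau_l$ by an operator built from the normalized $R$-matrix on adjacent factors, and the quadratic, braid, and $x$-commutation relations are traced to unitarity, the Yang--Baxter equation, and the compatibility of $R^{\text{norm}}$ with the spectral parameters, with the definition of $d_{ij}$ accounting for the match with ${\sf Q}^J_{ij}$.

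There is, however, a genuine gap in your definition of $\tau_l$ on the diagonal blocks $\nu_l=\nu_{l+1}$. Your only modification of $R^{\text{norm}}$ is pole-clearing, and in this case there is no pole to clear, so your $\tau_l$ is $R^{\text{norm}}_{V,V}$ itself. Unitarity then gives $\tau_l^2 e(\nu)=\mathrm{id}$, not $0={\sf Q}^J_{ii}$, and since $R^{\text{norm}}$ exchanges the spectral parameters exactly (and $X(\nu_l)=X(\nu_{l+1})$), one gets $(\tau_l x_l - x_{l+1}\tau_l)e(\nu)=0$ rather than $-e(\nu)$; both defining relations fail. Your derivation of $\tau_l^2e(\nu)=0$ from ``the pole-clearing factors reproduce ${\sf Q}^J_{ii}=0$'' is circular here, because those factors are trivial when $i=j$. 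The actual construction in \cite{KKasKim1} defines $\tau_l$ on these blocks as a divided difference, essentially $(X_{l+1}-X_l)^{-1}\left(R^{\text{norm}}-\mathrm{id}\right)$ up to normalization, and this requires the additional nontrivial input that $R^{\text{norm}}_{V,V}(z_1,z_2)-\mathrm{id}$ vanishes along $z_1=z_2$ (i.e.\ the specialized $R$-matrix is the identity there), so that the divided difference is regular on the completion. With that operator the nilpotency $\tau_l^2e(\nu)=0$ and the $\mp e(\nu)$ corrections follow by direct computation with unitarity. Your treatment of the off-diagonal case, and the normalization bookkeeping you flag at the end, are otherwise sound.
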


\vskip 2mm

Hence we obtain a functor $${\mathcal F}_{\beta}: \text{mod}\,
(R^{J}(\beta)) \longrightarrow \text{mod}\, U_{q}'(\g)$$ defined by
$$M \longmapsto \widehat{V}^{\otimes \beta} \otimes_{R^{J}(\beta)}
M  \qquad \text{for} \ \ M \in \text{mod} \, (R^{J}(\beta)).$$

Write $\text{mod}\, (R^{J}) : = \bigoplus_{\beta \in Q_{+}}
\text{mod}\, (R^{J}(\beta))$ and set
$${\mathcal F}=\bigoplus_{\beta \in Q_{+}} {\mathcal F}_{\beta}: \text{mod}\,(R^{J})
\longrightarrow \text{mod}\,U_{q}'(\g).$$ The functor $\mathcal{F}$
is called the {\it quantum affine Schur-Weyl duality functor}. The
basic properties of $\mathcal{F}$ are summarized in the following
theorem.

\begin{theorem}{\rm \cite{KKasKim1}} \label{thm:F} \hfill

{\rm

(a) The functor $\mathcal{F}$ restricts to
$$\mathcal{F} : \text{rep}\,(R^{J})
\longrightarrow {\mathcal C}_{\text{int}},$$ where
$\text{rep}\,(R^{J}):= \bigoplus_{\beta \in Q_{+}} \text{rep}\,
(R^{J}(\beta))$ and  ${\mathcal C}_{\text{int}}$ denotes the
category of finite dimensional integrable $U_{q}'(\g)$-modules.

(b) For each $i \in J$, let $S(\alpha_i):={\mathbf k}\,u(i)$ be the
$1$-dimensional graded simple $R^{J}(1)$-module defined by
$$e(j)\ u(i) = \delta_{ij}\, u(i), \ \ x_{1} \, u(i) = 0.$$
Then we have
$${\mathcal F}(S(\alpha_i)) \cong (V_{s(i)})_{X(i)},$$
where $(V_{s(i)})_{X(i)}$ is the evaluation module of $V_{s(i)}$ at
$z_{i}=X(i)$.

(c) ${\mathcal F}$ is a tensor functor; i.e., there exists a
canonical $U_{q}'(\g)$-module isomorphisms
$${\mathcal F}(R^{J}(0)) \cong {\mathbf k}, \quad {\mathcal F}(M
\circ N) \cong {\mathcal F}(M) \otimes {\mathcal F}(N)$$ for $M \in
\text{rep}\, (R^{J}(m))$, $N \in \text{rep}\,(R^{J}(n))$.

(d) If the quiver $\Gamma^{J}$ is of type $A_{n}$ $(n \ge 1)$,
$D_{n}$ $(n \ge 4)$, $E_{6}$, $E_{7}$, $E_{8}$, then ${\mathcal F}$
is exact. }
\end{theorem}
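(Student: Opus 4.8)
The plan is to prove the four assertions of Theorem \ref{thm:F} by systematically analyzing the bimodule $\widehat{V}^{\otimes \beta}$ and the behavior of the functor $\mathcal{F}_\beta(M) = \widehat{V}^{\otimes \beta} \otimes_{R^J(\beta)} M$. I would organize the argument around the interplay between the Khovanov-Lauda-Rouquier generators acting on the right of $\widehat{V}^{\otimes \beta}$ and the normalized $R$-matrices that encode the $U_q'(\g)$-module structure on the left.

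\emph{Part (b) first.} I would begin with (b), since it is the computational cornerstone that anchors the whole functor and informs the other parts. For the one-dimensional module $S(\alpha_i) = \mathbf{k}\, u(i)$, the tensor product $\widehat{V}^{\otimes \alpha_i} \otimes_{R^J(\alpha_i)} S(\alpha_i)$ collapses: the idempotent $e(i)$ picks out $\widehat{V}_i = \widehat{\mathcal O}_{\mathbf T, X(i)} \otimes (V_{s(i)})_{\text{aff}}$, and the relation $x_1 \, u(i) = 0$ forces specialization of the spectral parameter $X_1 = z_{V_{s(i)}}$ at the value $X(i)$. Thus the completed affinization specializes to the evaluation module $(V_{s(i)})_{X(i)}$, giving the claimed isomorphism. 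The key point I would verify is that the completion $\widehat{\mathcal O}_{\mathbf T, X(i)}$ tensored down along $x_1 \mapsto 0$ recovers exactly the fiber at $X(i)$ as a $U_q'(\g)$-module.

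\emph{Parts (c) and (a) next.} For (c), the tensor-functor property, I would compute $\mathcal{F}(M \circ N)$ using the definition of the convolution product $M \circ N = R(m+n) \otimes_{R(m)\otimes R(n)} (M \otimes N)$ and the associativity of tensor products, together with the compatibility of $\widehat{V}^{\otimes(\beta+\gamma)}$ with the embedding $R^J(m) \otimes R^J(n) \hookrightarrow R^J(m+n)$; the factorization $\widehat{V}^{\otimes(\beta+\gamma)} \cong \widehat{V}^{\otimes\beta} \boxtimes \widehat{V}^{\otimes\gamma}$ over this subalgebra yields $\mathcal F(M\circ N)\cong\mathcal F(M)\otimes\mathcal F(N)$, and $\mathcal F(R^J(0))\cong\mathbf k$ is immediate. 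For (a), I would use (b) and (c) together: every finite-dimensional $R^J(\beta)$-module has a composition series whose simple subquotients are built from convolution products of the $S(\alpha_i)$, and by (b) and (c) their images are obtained by tensoring evaluation modules of good modules, which are finite-dimensional and integrable; since $\mathcal{C}_{\text{int}}$ is closed under tensor products and subquotients, $\mathcal{F}$ lands in $\mathcal{C}_{\text{int}}$.

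\emph{Part (d), the main obstacle.} I expect (d) — exactness of $\mathcal{F}$ in the simply-laced finite types — to be the genuinely hard step, since $\mathcal{F}_\beta(-) = \widehat{V}^{\otimes\beta}\otimes_{R^J(\beta)}(-)$ is right exact in general and exactness requires $\widehat{V}^{\otimes\beta}$ to be \emph{flat} as a right $R^J(\beta)$-module. The plan is to reduce flatness to a statement about the denominators $d_{V_{s(i)},V_{s(j)}}$: the ADE hypothesis on $\Gamma^J$ controls the orders of the zeros $d_{ij}$ via $a^J_{ij} = -d_{ij}-d_{ji}$, and in these types one has enough rigidity in the pole structure of the $R$-matrices to guarantee that $\widehat{V}^{\otimes\beta}$ is free, or at least flat, over $R^J(\beta)$. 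Concretely, I would invoke the known structure of $R$-matrix denominators for good modules over affine quantum groups and the combinatorics of the associated root system, arguing that the completed tensor power has no higher Tor against $R^J(\beta)$-modules precisely when the quiver is of finite ADE type. This is the point where the deepest input from \cite{KKasKim1} — the detailed analysis of poles of $R$-matrices and the resulting flatness — must be brought to bear, and it is here that the simply-laced restriction is essential.
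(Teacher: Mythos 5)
First, note that this survey does not actually prove Theorem \ref{thm:F}: it is stated with a citation to \cite{KKasKim1} and no argument is given, so your proposal can only be measured against the proof in that reference. Your treatment of (b) and (c) matches the standard argument there: $x_1$ acts on $\widehat{V}_i$ as multiplication by $X_1-X(i)$, so tensoring with $S(\alpha_i)$ specializes the completed affinization to its fibre at $X(i)$, and the factorization of $\widehat{V}^{\otimes(\beta+\gamma)}$ over $R^J(\beta)\otimes R^J(\gamma)$ gives the tensor property by associativity of the tensor product. For (a) your route needs two small repairs. An arbitrary finite dimensional \emph{ungraded} module need not have the $x_k$ acting nilpotently (e.g.\ ${\mathbf k}[x_1]/(x_1-c)$ with $c\neq 0$), so its simple constituents need not be quotients of convolutions of the $S(\alpha_i)$; the cleaner argument is that the completion $\widehat{\mathcal O}_{{\mathbf T}^n,X(\nu)}$ forces the $x_k$ to act topologically nilpotently on $\widehat{V}^{\otimes\beta}$ from the right, so the non-nilpotent part of $M$ dies in the tensor product and the rest is a quotient of a finite free chunk, hence ${\mathcal F}(M)$ is finite dimensional and integrable. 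Moreover, at this stage one only has right exactness, so the passage from convolution products to their subquotients and extensions must be done by a right-exact induction on composition length, which you do not spell out.

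The genuine gap is in (d). You correctly reduce exactness to flatness of $\widehat{V}^{\otimes\beta}$ as a right $R^J(\beta)$-module, but the justification offered --- that the ADE hypothesis provides ``enough rigidity in the pole structure of the $R$-matrices'' to kill higher $\operatorname{Tor}$ --- is a restatement of the conclusion, not an argument. The pole orders $d_{ij}$ enter only in determining the quiver $\Gamma^J$ and the parameters ${\sf Q}^J_{ij}$ of the KLR algebra; they play no further direct role once $R^J(\beta)$ is fixed. The proof in \cite{KKasKim1} instead draws on the representation theory of \emph{finite-type} KLR algebras on the right-hand side of the bimodule: finiteness of the global dimension of $R^J(\beta)$ in types $A$, $D$, $E$ and the stratification of its module category by standard (cuspidal) modules attached to a reduced expression adapted to the quiver; one then proves $\operatorname{Tor}^{R^J(\beta)}_k(\widehat{V}^{\otimes\beta},S)=0$ for $k>0$ by induction along this stratification, using the already-established tensor property to control ${\mathcal F}$ on convolution products. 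Without some such homological input the plan does not close: flatness of a completed tensor power over an infinite-dimensional algebra is precisely the hard theorem here, and information about denominators of $R$-matrices alone will not deliver it.
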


\vskip 5mm

\section{The Categories ${\mathcal T}_{N}$ and ${\mathcal C}_{N}$}
\label{sec:category T and C}

Take ${\mathbf k}= \C(q)$.  Let $\g = A_{N-1}^{(1)}$ be the affine
Kac-Moody algebra of type $A_{N-1}^{(1)}$ and let $V=V(\varpi_{1})$
be the fundamental representation of $U_{q}'(A_{N-1}^{(1)})$ of
weight $\varpi_{1}$.

Set ${\mathcal S}=\{V\}$, $J=\Z$ and let $X:\Z \rightarrow {\mathbf
k}^{\times}$ be the map given by $j \mapsto q^{2j}$ $(j \in \Z)$.
Then the normalized $R$-matrix $R_{V,V}^{\text{norm}}: V_{z_1}
\otimes V_{z_2} \longrightarrow V_{z_2} \otimes V_{z_1}$ has the
denominator $d_{V,V}(z_2 / z_1) = z_2 / z_1 - q^2.$ Hence we have
$$d_{ij}=\begin{cases} 1 \ \ & \text{if} \ j=i+1, \\
0 \ \ & \text{otehrwise},
\end{cases}$$
which yields the quiver $\Gamma^{J}$ of type $A_{\infty}$. Take
$P_{J}=\bigoplus_{k \in \Z} \Z \, \varepsilon_{k}$ to be the weight
lattice and let $Q^{J}=\bigoplus_{k \in \Z} \Z \,(\varepsilon_{k} -
\varepsilon_{k+1})$ be the root lattice. There is a bilinear form on
$P_{J}$ given by $(\varepsilon_a, \varepsilon_b)=\delta_{ab}$.

\vskip 2mm

For $a \le b$, let $l=b-a+1$ and let $L(a,b):= {\mathbf k}\, u(a,b)$
be the 1-dimensional graded simple $R^{J} (\varepsilon_a -
\varepsilon_{b+1})$-module defined by
\begin{equation*}
\begin{aligned}
& x_{s} \, u(a,b)=0, \quad \tau_{t}\, u(a,b)=0 \ \ (1 \le s \le l, \ 1 \le t \le l-1), \\
& e(\nu)\, u(a,b) = \begin{cases} u(a,b) \ \ & \text{if} \ \nu=(a,
a+1, \ldots, b), \\ 0 \ \ & \text{otherwise}.
\end{cases}
\end{aligned}
\end{equation*}
Then we have
\begin{equation*}
{\mathcal F}(L(a,b)) \cong \begin{cases} V(\varpi_{l})_{(-q)^{a+b}}
\ \ & \text{if} \ 0 \le l \le N , \\
0 \ \ & \text{if} \ l>N,
\end{cases}
\end{equation*}
where ${\mathcal F}:\text{mod}\, (R^{J}(l)) \longrightarrow
\text{mod} \, U_{q}'(\g)$ is the  quantum affine Schur-Weyl duality
functor.

\vskip 2mm

Recall that $\text{Rep}\, (R^J(l))$ is the category of finite
dimensional graded $R^J(l)$-modules. Set ${\mathcal
R}:=\bigoplus_{l\ge 0} \text{Rep} \, (R^J(l))$ and let ${\mathcal
S}$ be the Smallest Serre subcategory of ${\mathcal R}$ such that

\vskip 2mm

\ \ \ (i) ${\mathcal S}$ contains $L(a, a+N)$ for all $a \in \Z$,

\ \ \ (ii) $X \circ Y, Y \circ X \in {\mathcal S}$ for all $X \in
{\mathcal R}$, $Y \in {\mathcal S}$.

Take the quotient category ${\mathcal R} \big/ {\mathcal S}$ and let
${\mathcal Q}: {\mathcal R} \rightarrow {\mathcal R} \big/ {\mathcal
S}$ be the canonical projection functor. Then we have:

\begin{proposition}{\rm \cite{KKasKim1}} \hfill

{\rm (a) The functor ${\mathcal F}$ factors through ${\mathcal R}
\big/ {\mathcal S}$. That is, there is a canonical functor
${\mathcal F}_{\mathcal S}: {\mathcal R} \big/ {\mathcal S}
\longrightarrow \text{mod}\, U_{q}'(\g)$ such that the following
diagram is commutative.

\begin{equation*}
\xymatrix{  {\mathcal R}  \ar[dd]_{\mathcal Q} \ar[rrrr]^{\mathcal
F}   & & & & \text{mod}\,U_{q}'(\g) \\
            {}                           & &  & {} \\
            {\mathcal R} \big/ {\mathcal S} \ar[uurrrr]_{{\mathcal F}_{\mathcal S}}             & & & & {} }
\end{equation*}

(b) The functor ${\mathcal F}_{\mathcal S}$ sends a simple object in
${\mathcal R}\big/ {\mathcal S}$ to a simple object in $\text{mod}\,
U_{q}'(\g)$. }
\end{proposition}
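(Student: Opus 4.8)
The whole argument rests on the explicit computation $\mathcal{F}(L(a,b)) \cong V(\varpi_{l})_{(-q)^{a+b}}$ for $l=b-a+1 \le N$ and $\mathcal{F}(L(a,b))=0$ for $l>N$, combined with the exactness and tensor properties recorded in Theorem \ref{thm:F}. For part (a) the plan is to first upgrade Theorem \ref{thm:F}(d) to the present setting: although exactness is stated there for finite types, every object of $\text{Rep}\,(R^J(\beta))$ is supported on the finitely many vertices occurring in $\beta$, hence is a module over the Khovanov-Lauda-Rouquier algebra of a finite sub-quiver of $\Gamma^J$ of type $A_m$. Since exactness can be tested on each $\beta$-component separately, $\mathcal{F}$ is exact on all of $\mathcal{R}$. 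It follows that the full subcategory $\Ker \mathcal{F} := \{X \in \mathcal{R} \mid \mathcal{F}(X)=0\}$ is a Serre subcategory. Now $L(a,a+N)$ has length $l=N+1>N$, so $\mathcal{F}(L(a,a+N))=0$ and $\Ker \mathcal{F}$ contains all generators required in condition (i); and by the tensor isomorphism $\mathcal{F}(X \circ Y) \cong \mathcal{F}(X) \otimes \mathcal{F}(Y)$ of Theorem \ref{thm:F}(c) it is stable under $\circ$ on either side, so condition (ii) holds. By minimality of $\mathcal{S}$ we obtain $\mathcal{S} \subseteq \Ker \mathcal{F}$, and the universal property of the Serre quotient then yields a unique exact functor $\mathcal{F}_{\mathcal{S}}: \mathcal{R}/\mathcal{S} \to \text{mod}\, U_{q}'(\g)$ with $\mathcal{F}_{\mathcal{S}} \circ \mathcal{Q} = \mathcal{F}$, which is the asserted commutative diagram.

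For part (b) I would use the standard fact that the simple objects of the Serre quotient $\mathcal{R}/\mathcal{S}$ are exactly the images $\mathcal{Q}(L)$ of simple objects $L$ of $\mathcal{R}$ with $L \notin \mathcal{S}$, and that $\mathcal{F}_{\mathcal{S}}(\mathcal{Q}(L)) = \mathcal{F}(L)$. Thus everything reduces to proving that $\mathcal{F}(L)$ is simple whenever $L$ is a simple $R^J(\beta)$-module with $\mathcal{F}(L) \neq 0$. Here I would invoke the classification of simple modules over the type-$A_{\infty}$ Khovanov-Lauda-Rouquier algebra: every such $L$ is the head of a convolution product $L(a_1,b_1) \circ \cdots \circ L(a_r,b_r)$ of segment modules, ordered by a suitable convexity condition (the Khovanov-Lauda-Rouquier analogue of Zelevinsky's multisegment classification, with the $L(a_j,b_j)$ playing the role of cuspidal modules). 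Applying the exact tensor functor $\mathcal{F}$ to the surjection from this standard module onto $L$ produces a surjection $V(\varpi_{l_1})_{x_1} \otimes \cdots \otimes V(\varpi_{l_r})_{x_r} \twoheadrightarrow \mathcal{F}(L)$; since $\mathcal{F}(L) \neq 0$, each segment must satisfy $l_j \le N$, so every tensor factor is a nonzero fundamental evaluation module.

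The remaining and decisive step — which I expect to be the main obstacle — is to identify this quotient as a \emph{simple} module. The key is that $\mathcal{F}$ carries the intertwiners $\tau_k$ of $R^J(\beta)$ to the normalized $R$-matrices $R^{\text{norm}}_{V,V}$ acting on consecutive tensor factors. Granting this compatibility, the image of the standard-to-head surjection is precisely the image of the composite $R$-matrix on the ordered tensor product, that is, the head of $V(\varpi_{l_1})_{x_1} \otimes \cdots \otimes V(\varpi_{l_r})_{x_r}$; and because the $V(\varpi_{l_j})$ are good modules, the theory of normalized $R$-matrices and their denominators forces this head to be simple. Hence $\mathcal{F}(L) = \mathcal{F}_{\mathcal{S}}(\mathcal{Q}(L))$ is simple. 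Making the italicized compatibility between the braid-like Khovanov-Lauda-Rouquier action and the $R$-matrix action precise is exactly the technical heart of the construction of $\mathcal{F}$ in \cite{KKasKim1}, and it is where the real work of the proof lies.
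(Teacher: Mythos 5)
The paper itself gives no proof of this proposition---it is quoted from \cite{KKasKim1} with only the computation $\mathcal{F}(L(a,b))\cong V(\varpi_l)_{(-q)^{a+b}}$ (for $l\le N$, and $0$ for $l>N$) supplied as context---so there is no argument in the text to compare yours against line by line. Your part (a) is complete and correct relative to the stated ingredients: the reduction of exactness to finite $A_m$ subquivers is legitimate (each $R^J(\beta)$ only sees the finite support of $\beta$), exactness makes $\Ker\mathcal{F}$ a Serre subcategory, the generators $L(a,a+N)$ are killed since $N+1>N$, the tensor property gives closure under $\circ$, and minimality of $\mathcal{S}$ plus the universal property of the Serre quotient finishes it. This is surely the intended argument.

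In part (b) there is one genuine gap. You reduce to showing that $\mathcal{F}(L)$ is simple \emph{whenever $L$ is simple with $\mathcal{F}(L)\neq 0$}, but the simple objects of $\mathcal{R}/\mathcal{S}$ are the $\mathcal{Q}(L)$ with $L$ simple and $L\notin\mathcal{S}$, and you have only established $\mathcal{S}\subseteq\Ker\mathcal{F}$, not the reverse inclusion on simples. A priori there could be a simple $L\notin\mathcal{S}$ with $\mathcal{F}(L)=0$; then $\mathcal{Q}(L)$ is a simple object of the quotient sent to the zero module, which is not simple, and (b) would fail. You must additionally show that every simple $L$ with $\mathcal{F}(L)=0$ already lies in $\mathcal{S}$. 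This is fixable with the segment calculus you are already using: $\mathcal{F}(L)=0$ forces some segment of length $l_j\ge N+1$ in the multisegment of $L$; such a segment module $L(a,b)$ with $b-a\ge N$ is a quotient of $L(a,a+N)\circ L(a+N+1,b)$ and hence lies in $\mathcal{S}$ by conditions (i), (ii) and Serre closure; then the whole standard module, and therefore its head $L$, lies in $\mathcal{S}$. With that supplement, and granting the two black boxes you explicitly flag (the multisegment classification of simples for the type $A_\infty$ Khovanov-Lauda-Rouquier algebra, and the identification of $\mathcal{F}$ applied to the intertwiner-induced morphisms with the composite normalized $R$-matrices, whose image on an ordered tensor product of fundamental modules is simple), your outline matches what \cite{KKasKim1} does.
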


\vskip 2mm

Let $L_{a}:=L(a, a+N-1)$ and $u_{a}:=u(a, a+N-1) \in L_{a}$ $(a \in
\Z)$. Then ${\mathcal F}(L_{a})$ is isomorphic to the trivial
representation of $U_{q}'(\g)$. Let $S:P_{J} \rightarrow P_{J}$
$(\varepsilon_{a} \mapsto \varepsilon_{a+N-1})$ be an automorphism
on $P_{J}$ and let $B$ be the bilinear form on $P_{J}$ given by
$$B(x,y):= - \sum_{k>0} (S^k x, y) \ \ \text{for all} \ x, y \in
P_{J}.$$ We define a new tensor product $\star$ on ${\mathcal
R}\big/ {\mathcal S}$ by
$$X \star Y:= q^{B(\alpha, \beta)}\, X \circ Y \quad \text{for} \ \ X
\in ({\mathcal R}\big/ {\mathcal S})_{\alpha}, \ Y \in ({\mathcal
R}\big/ {\mathcal S})_{\beta}.$$ Then there exists an isomorphism
$R(a)(X): L_{a} \star X \overset{\sim} \longrightarrow X \star
L_{a}$ which is functorial in $X \in {\mathcal R}\big/ {\mathcal
S}$. Moreover, the isomorphisms
$$R_{a}(L_{b}): L_{a} \star L_{b} \overset {\sim} \longrightarrow
L_{b} \star L_{a} \ \ \text{and} \ \ R_{b}(L_{a}): L_{b} \star L_{a}
\overset {\sim} \longrightarrow L_{a} \star L_{b}$$ are inverses to
each other. One can verify that $\{L_{a}, R_{a}(L_{b}) \mid \, a, b
\in \Z \}$ forms a commuting family of central objects in
$({\mathcal R} \big/ {\mathcal S}, \star)$ (see \cite[Appendix
A.6]{KKasKim1}).

\vskip 2mm

Let ${\mathcal T}_{N}':= ({\mathcal R} \big/ {\mathcal S}) [
L_{a}^{\star -1} \mid \, a\in \Z]$ \, be the localization of
${\mathcal R} \big/ {\mathcal S}$  by this commuting family and
define
$${\mathcal T}_{N}:= ({\mathcal R} \big/ {\mathcal S}) [
L_{a}\cong {\mathbf 1} \mid \, a\in \Z].$$ We denote by ${\mathcal
P}:{\mathcal R}\big/ {\mathcal S} \rightarrow {\mathcal T}_{N}$ the
canonical functor.

\begin{theorem}{\rm \cite{KKasKim1}} \hfill

{\rm (a) The category ${\mathcal T}_{N}$ is a rigid tensor category
; i.e., every object in ${\mathcal T}_{N}$ has a right dual and a
left dual.

(b) The functor  ${\mathcal F}_{\mathcal S}$ factors through
${\mathcal T}_{N}$. That is, there exists a canonical functor
${\mathcal F}_{N}: {\mathcal T}_{N} \longrightarrow \text{mod}\,
U_{q}'(\g)$ such that the following diagram is commutative.
\begin{equation*}
\xymatrix{ {\mathcal R} \ar[dd]_{\mathcal Q} \ar[rrrr]^{\mathcal F} & & &  & \text{mod}\, U_{q}'(\g) \\
{} & & & & {} \\
{\mathcal R} \big/ {\mathcal S} \ar[rrrr]_{\mathcal P}
\ar[uurrrr]^{{\mathcal F}_{\mathcal S}} & & & & {\mathcal T}_{N}
\ar[uu]_{{\mathcal F}_{N}} }
\end{equation*}

(c) The functor ${\mathcal F}_{N}$ is exact and sends a simple
object in ${\mathcal T}_{N}$ to a simple object in $\text{mod}\,
U_{q}'(\g)$. }

\end{theorem}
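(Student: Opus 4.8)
We sketch the argument, treating the three assertions in the order (b), (a), (c); the factorization (b) is formal, whereas the rigidity (a) is the substantive point. For (b), the plan is to invoke the universal property of the localization $\mathcal{T}_N = (\mathcal{R}\big/\mathcal{S})[L_a \cong \mathbf{1} \mid a \in \Z]$ (see \cite{KKasKim1}). By Theorem \ref{thm:F}(c) the functor $\mathcal{F}$ is a tensor functor, and this structure descends to $\mathcal{F}_{\mathcal{S}}$ with respect to the twisted product $\star$, since the scalar $q^{B(\alpha,\beta)}$ entering the definition of $\star$ does not alter the image under $\mathcal{F}$. Now $\mathcal{F}(L_a)$ is the trivial representation, that is, the unit object $\mathbf{1}$, and $\{L_a, R_a(L_b)\}$ is a commuting family of central objects whose images are coherently identified with $\mathbf{1}$; hence $\mathcal{F}_{\mathcal{S}}$ carries each localization datum to an isomorphism. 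By the universal property it factors uniquely as $\mathcal{F}_N \circ \mathcal{P}$, giving the asserted commutative diagram.

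For (a), the heart of the matter is to exhibit left and right duals, and I would start from the generators $L(a,b)$ with $1 \le b-a+1 \le N-1$. The candidate right dual of $L(a,b)$ is $L(b+1, a+N-1)$: a degree count places $L(a,b) \circ L(b+1, a+N-1)$ in the degree $\varepsilon_a - \varepsilon_{a+N}$ of $L_a$, and the segment combinatorics of these one-dimensional modules produce a surjection onto the simple head $L(a,b) \star L(b+1, a+N-1) \twoheadrightarrow L_a$. After the identification $L_a \cong \mathbf{1}$ this becomes an evaluation $L(a,b) \star L(b+1, a+N-1) \to \mathbf{1}$, while the socle of $L(b+1, a+N-1) \circ L(a,b)$ furnishes a coevaluation $\mathbf{1} \to L(b+1, a+N-1) \star L(a,b)$; the two triangle identities then reduce to the fact, recorded before the statement, that $R_a(L_b)$ and $R_b(L_a)$ are mutually inverse. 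To pass from the generators to arbitrary objects, I would use the anti-involution $\psi$ of \eqref{eq:anti}: the $\mathbf{k}$-linear dual with $\psi$-twisted action is a contravariant tensor self-equivalence of $\mathcal{T}_N$ producing a candidate dual for every object compatibly with $\star$, which reduces the verification of rigidity to the generating case already treated.

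For (c), exactness follows from Theorem \ref{thm:F}(d): although $\Gamma^J$ is of type $A_\infty$, each $\beta$ has finite support, so $R^J(\beta)$ involves only a finite type $A$ subquiver and $\mathcal{F}_\beta$ is exact; hence $\mathcal{F}$ is exact. As the Serre quotient functor $\mathcal{Q}$ and the localization functor $\mathcal{P}$ are exact essentially surjective quotient functors and $\mathcal{F} = \mathcal{F}_N \circ \mathcal{P} \circ \mathcal{Q}$, exactness descends to $\mathcal{F}_N$. The simple-to-simple property is already known for $\mathcal{F}_{\mathcal{S}}$ by the preceding proposition; since every simple object of $\mathcal{T}_N$ is the image under $\mathcal{P}$ of a simple object of $\mathcal{R}\big/\mathcal{S}$ not sent to $0$ and $\mathcal{F}_N \circ \mathcal{P} = \mathcal{F}_{\mathcal{S}}$, this transfers to $\mathcal{F}_N$.

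The main obstacle is the rigidity in (a). Because the convolution products $L(a,b) \circ L(b+1, a+N-1)$ are not simple, one must isolate the correct evaluation and coevaluation among their subquotients and verify the triangle identities honestly against the $R$-matrix formalism; moreover the reduction of rigidity from the generating objects to all of $\mathcal{T}_N$ via the $\psi$-duality genuinely uses the localized tensor structure rather than formal nonsense.
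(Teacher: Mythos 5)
The paper states this theorem without proof, citing \cite{KKasKim1}, so there is no in-paper argument to compare yours against; I can only assess your sketch on its own terms. Parts (b) and (c) are reasonable at the level of a sketch: the factorization through the localization via its universal property (granting the coherence of the identifications $\mathcal{F}_{\mathcal S}(L_a)\cong \mathbf{1}$ with the morphisms $R_a(L_b)$), the descent of exactness along the exact quotient and localization functors, and the transfer of the simple-to-simple property from $\mathcal{F}_{\mathcal S}$ are all in line with how such statements are established in \cite{KKasKim1}. Your candidate dual in (a) is also the right one: $L(a,b)\mapsto V(\varpi_l)_{(-q)^{a+b}}$ with $l=b-a+1$, and $L(b+1,a+N-1)$ has length $N-l$ and spectral parameter $(-q)^{a+b+N}$, matching the known dual of a fundamental representation in type $A$.

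The genuine gap is the last step of (a): the passage from duals of the generating objects $L(a,b)$ to duals of \emph{arbitrary} objects of $\mathcal{T}_N$. The class of objects admitting left and right duals in a monoidal abelian category is closed under $\star$ and under direct summands, but \emph{not} under subquotients or extensions; since a general object of $\mathcal{T}_N$ is obtained from the $L(a,b)$ precisely by such operations, rigidity does not propagate by "generation." Moreover, the $\psi$-twisted $\mathbf{k}$-linear dual you invoke only produces a candidate dual \emph{object}; rigidity requires evaluation and coevaluation \emph{morphisms} satisfying the triangle identities for every object, and a contravariant self-duality of the category supplies neither. Relatedly, your claim that the triangle identities for the $L(a,b)$ "reduce to" the mutual inverseness of $R_a(L_b)$ and $R_b(L_a)$ is not substantiated: those isomorphisms concern commutation with the central objects $L_a$, not the pairing $L(a,b)\star L(b+1,a+N-1)\to \mathbf{1}$. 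This is exactly where the substantive work of \cite{KKasKim1} lies (head/socle analysis of the convolutions, construction of the unit and counit for all objects, and verification of the snake identities), and your sketch, as you partly acknowledge in your closing paragraph, does not yet supply it.
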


\vskip 2mm

Let ${\mathcal C}_{N}$ be the smallest full subcategory of
${\mathcal C}_{\text{int}}$ consisting of $U_{q}'(\g)$-modules $M$
such that every composition factor of $M$ appears as a composition
factor of a tensor product of modules of the form
$V(\varpi_{1})_{q^{2j}}$ $(j \in \Z)$. Thus ${\mathcal C}_{N}$ is an
abelian category containing all $U_{q}'(\g)$-modules
$V(\varpi_{i})_{(-q)^{i+2a-1}}$ for $1 \le i \le N-1$ and $a \in
\Z$. Moreover, ${\mathcal C}_{N}$ is stable under taking submodules,
quotients, extensions and tensor products. Hence ${\mathcal F}_{N}$
restricts to an exact functor
$${\mathcal F}_{N} : {\mathcal T}_{N} \longrightarrow
{\mathcal C}_{N}.$$

Let $Irr\,({\mathcal T}_{N})$ (respectively, $Irr\, ({\mathcal
C}_{N})$) denote the set of isomorphism classes of simple objects in
${\mathcal T}_{N}$ (respectively, in ${\mathcal C}_{N}$). Define an
equivalence relation on $Irr\, ({\mathcal T}_{N})$ by setting \ $X
\sim Y$ if and only if $X \overset{\sim} \rightarrow q^m Y$ for some
$m \in \Z$. Set $$Irr\, ({\mathcal T}_{N})|_{q=1}:= Irr\,({\mathcal
T}_{N}) \big/ \sim.$$

\begin{theorem} {\rm \cite{KKasKim1}} \hfill

{\rm (a) The functor ${\mathcal F}_{N}$ induces a bijection between
$Irr\, ({\mathcal T}_{N})|_{q=1}$ and $Irr\,({\mathcal C}_{N})$.

(b) The exact functor ${\mathcal F}_{N}$ induces a ring isomorphism
$$ \phi_{N} : K({\mathcal T}_{N})|_{q=1}
\overset {\sim} \longrightarrow K({\mathcal C}_{N}).$$

Therefore, the category ${\mathcal T}_{N}$ provides a {\it graded
lifting} of ${\mathcal C}_{N}$ as a rigid tensor category. }
\end{theorem}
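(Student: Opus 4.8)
The plan is to derive part (b) from part (a) and then to concentrate on (a), whose only genuinely nontrivial half is injectivity. For the reduction, note that since $\mathcal{F}_N$ is exact it induces a group homomorphism $K(\mathcal{T}_N)\to K(\mathcal{C}_N)$, and since $\mathcal{F}_N$ is a tensor functor carrying the twisted product $\star$ to $\otimes$ (this is Theorem \ref{thm:F}(c) transported through $\mathcal{P}$), this homomorphism is multiplicative. The grading-shift autoequivalence $q$ acts on $K(\mathcal{T}_N)$ but becomes the identity in the ungraded target, so the map descends to the ring homomorphism $\phi_N:K(\mathcal{T}_N)|_{q=1}\to K(\mathcal{C}_N)$; here one checks that the scalar $q^{B(\alpha,\beta)}$ relating $\star$ to $\circ$ specializes to $1$. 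Both Grothendieck groups are free $\Z$-modules on their simple classes, with bases $Irr(\mathcal{T}_N)|_{q=1}$ and $Irr(\mathcal{C}_N)$ respectively, and because $\mathcal{F}_N$ sends simples to simples (the preceding theorem), $\phi_N$ carries basis elements to basis elements. Hence $\phi_N$ is an isomorphism precisely when the induced map on simples is a bijection, so (a) and (b) are equivalent and it suffices to prove (a).

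For surjectivity on simples I would argue as follows. Each generator $V(\varpi_1)_{q^{2j}}$ of $\mathcal{C}_N$ is $\mathcal{F}_N$ applied to the image in $\mathcal{T}_N$ of the simple module $S(\alpha_j)=L(j,j)$, by Theorem \ref{thm:F}(b) with the single good module $V$ and $X(j)=q^{2j}$. Given an arbitrary simple $S\in\mathcal{C}_N$, by definition it is a composition factor of some tensor product $V(\varpi_1)_{q^{2j_1}}\otimes\cdots\otimes V(\varpi_1)_{q^{2j_n}}$. The tensor property in Theorem \ref{thm:F}(c) identifies this tensor product with $\mathcal{F}_N$ of the convolution $S(\alpha_{j_1})\circ\cdots\circ S(\alpha_{j_n})$, and exactness of $\mathcal{F}_N$ shows that the composition factors of the image are exactly the images of the composition factors. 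Since $\mathcal{F}_N$ carries simples to simples, $S$ is isomorphic, up to a grading shift, to $\mathcal{F}_N(X)$ for some simple $X$ in $\mathcal{T}_N$, so the map $Irr(\mathcal{T}_N)|_{q=1}\to Irr(\mathcal{C}_N)$ is surjective.

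The main obstacle is injectivity, and my plan is to match the two sets of simples through an explicit combinatorial index set. On the $\mathcal{T}_N$-side the algebras $R^J(\beta)$ are of type $A_\infty$, so by Varagnolo--Vasserot the simple $R^J(\beta)$-modules, up to grading shift, are indexed by the dual canonical basis of $U_q^-(\g)$ in weight $\beta$, equivalently by multisegments of content $\beta$; passing to $\mathcal{R}/\mathcal{S}$ and localizing at the $L_a$ restricts attention to multisegments all of whose segments have length $\le N-1$. On the $\mathcal{C}_N$-side the simple objects are classified by their Drinfeld data, and the computation $\mathcal{F}(L(a,b))\cong V(\varpi_{b-a+1})_{(-q)^{a+b}}$ identifies the single segment $[a,b]$ with a fundamental module at the corresponding spectral parameter. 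The crux is to prove that $\mathcal{F}_N$ intertwines these two parametrizations, i.e.\ that the head of the convolution product attached to a multisegment maps to the simple quotient of the correspondingly ordered tensor product of fundamental modules; I would establish this by induction on the dominance order of multisegments, using the $R$-matrix and denominator combinatorics encoded in the quiver $\Gamma^J$ to control both the convolution products and the normal orderings on the quantum-affine side. Granting this matching of index sets, two simples of $\mathcal{T}_N$ with isomorphic images carry the same multisegment and hence agree up to grading shift. An equivalent and perhaps cleaner route to the same end is to prove directly that $\phi_N$ is injective by comparing, in each fixed total degree $\beta$, the finite ranks of the two Grothendieck groups, the equality of ranks together with the surjectivity already established forcing bijectivity. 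In either formulation it is this compatibility of the combinatorial classifications, rather than any of the formal homological input, that I expect to be the hard part.
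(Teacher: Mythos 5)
The first thing to note is that the paper itself contains no proof of this theorem to compare against: it is quoted from \cite{KKasKim1} in this survey without argument, so your proposal can only be measured against what such a proof must contain.

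Your formal reductions are sound. The derivation of (b) from (a) --- exactness gives a group homomorphism, the tensor property of Theorem \ref{thm:F}(c) transported through the quotient and the localization gives multiplicativity, both Grothendieck groups are free on the classes of simples, and a basis-to-basis map is an isomorphism iff it is a bijection on bases --- is the standard argument and is correct. The surjectivity argument is also essentially right, with one point worth making explicit: the convolution $S(\alpha_{j_1})\circ\cdots\circ S(\alpha_{j_n})$ lives in ${\mathcal R}$, and some of its composition factors are killed by the passage to ${\mathcal R}/{\mathcal S}$ and then to ${\mathcal T}_N$; you need to observe that these are precisely the factors on which ${\mathcal F}$ vanishes (those involving a segment of length $>N-1$, on which ${\mathcal F}(L(a,a+N))=0$), so that the images of the surviving composition factors still exhaust the composition factors of the tensor product. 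This is true but should be said.

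The genuine gap is injectivity, and you have identified it yourself without closing it: the clause ``Granting this matching of index sets'' is where the theorem actually lives. What has to be proven is that ${\mathcal F}_N$ intertwines the multisegment parametrization of simples over the type $A_\infty$ Khovanov-Lauda-Rouquier algebras (via Varagnolo--Vasserot, or via simple heads of suitably ordered convolution products of the $L(a,b)$) with the Zelevinsky-type classification of simples in ${\mathcal C}_N$ by Drinfeld data --- concretely, that the simple head of an ordered convolution of segments maps to the simple head of the correspondingly ordered tensor product of the modules $V(\varpi_{b-a+1})_{(-q)^{a+b}}$. Establishing this requires simultaneous control of when the relevant $R$-matrices (on the KLR side, the intertwiners; on the affine side, the normalized $R$-matrices and their denominators) are isomorphisms, and it is the technical heart of \cite{KKasKim1}; ``induction on the dominance order'' is the right shape of an argument but is not yet an argument. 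Your alternative route of comparing ranks of the two Grothendieck groups in each fixed degree has the further problem that after localizing at the $L_a$ the $Q_+$-grading on $K({\mathcal T}_N)$ collapses, so the ``fixed total degree'' pieces are no longer obviously of finite rank, and the comparison would need to be set up with some care before rank-counting is even meaningful.
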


\vskip 5mm

\section{The category ${\mathcal C}_{Q}$} \label{sec:category Q}

In this section,  we deal with affine Kac-Moody algebras $\g$ of
type $A_{n}^{(1)}$ $(n \ge 1)$, $D_{n}^{(1)}$ $(n \ge 4)$,
$E_{6}^{(1)}$, $E_{7}^{(1)}$, $E_{8}^{(1)}$. Let $I=\{0, 1, \ldots,
n\}$ be the index set for the simple roots of $\g$ and set $I_{0} =
I \setminus \{0\}$. We denote by $\g_{0}$ the finite dimensional
simple Lie subalgebra of $\g$ generated by $e_i, f_i, h_i$ $(i \in
I_{0})$. Thus $\g_{0}$ is of type $A_{n}$ $(n\ge 1)$, $D_{n}$ $(n
\ge 4)$, $E_{6}$, $E_{7}$, $E_{8}$, respectively.

Let $Q$ be the Dynkin quiver associated with $\g_{0}$. A function
$\xi: I_{0} \rightarrow \Z$ is called a {\it height function} if
$\xi_{j}=\xi_{i}-1$ whenever we have an arrow $i \rightarrow j$.

Set $$\widehat{I_0}:=\{(i, p) \in I_{0} \times \Z \mid \, p -
\xi_{i} \in 2 \Z \}.$$ The {\it repetition quiver} $\widehat{Q}$ is
defined as follows.

\ \ \ (i) We take $\widehat{I_{0}}$ to be the set of vertices.

\ \ \ (ii) The arrows are given by
$$(i,p) \rightarrow (j, p+1), \quad (j,q) \rightarrow (i, q+1)$$
for all arrows $i \rightarrow j$ and $p, q \in Z$ such that $p -
\xi_{i} \in \Z$, $q - \xi_{j} \in \Z$.

\vskip 2mm

For all $i \in I_{0}$, let $s_{i}(Q)$ be the quiver obtained from
$Q$ by reversing the arrows that touch $i$. A reduced expression
$w=s_{i_1} \cdots s_{i_l} \in W_{0}$ is said to be {\it adapted to
$Q$} if $i_k$ is the source of $s_{i_{k-1}}\cdots s_{i_1}(Q)$ for
all $1 \le k \le l$. It is known that there is a unique Coxeter
element $\tau\in W_{0}$ which is adaped to $Q$.

Set $\widehat{\Delta}:= \Delta_{+} \times \Z$, where $\Delta_{+}$ is
the set of positive roots of $\g_{0}$. For each $i \in I_{0}$, let
$B(i) := \{ j\in I_{0}  \mid \, \text{there is a path from $j$ to
$i$} \}$ and define $\gamma_{i}:=\sum_{j\in B(i)} \alpha_j$. We
define a bijection $\phi: \widehat{I_0} \rightarrow {\widehat
\Delta}$ inductively as follows.

\ \ (1) We begin with $\phi(i, \xi_i):=(\gamma_i, 0)$.

\ \ (2) If $\phi(i,p) = (\beta, j)$ is given, then we define

\ \ \ \ \ $\bullet$ $\phi(i, p-2):= (\tau(\beta), j)$ \ if
$\tau(\beta) \in \Delta_{+}$,

\ \ \ \ \ $\bullet$ $\phi(i, p-2):= (-\tau(\beta), j-1)$ \ if
$\tau(\beta) \in \Delta_{-}$,

\ \ \ \ \ $\bullet$ $\phi(i, p+2):= (\tau^{-1}(\beta), j)$ \ if
$\tau^{-1}(\beta) \in \Delta_{+}$,

\ \ \ \ \ $\bullet$ $\phi(i, p+2):= (-\tau^{-1}(\beta), j+1)$ \ if
$\tau^{-1}(\beta) \in \Delta_{-}$.

Let $w_{0}$ be the longest element of $W_{0}$ and fix a reduced
expression $w_{0}=s_{i_1} \cdots s_{i_l}$ which is adapted to $Q$.
Set
$$J:= \{(i,p) \in \widehat{I_{0}} \mid \, \phi(i,p) \in \Pi_{0} \times
\{0\}\},$$ where $\Pi_{0}$ denotes the set of simple roots of
$\g_{0}$. Take the maps $X: J \rightarrow {\mathbf k}^{\times}$ and
$s:J \rightarrow \{V(\varpi_{i}) \mid \, i \in I_{0} \}$ defined by
\begin{equation*}
X(i,p) = (-q)^{p+h}, \quad   s(i,p) = V(\varpi_{i}) \ \ \ \text{for}
\ (i,p) \in J,
\end{equation*}
where $h$ is the Coxeter number of $\g_{0}$.

\begin{theorem}{\rm \cite{KKasKim2}} \label{thm:g0} \hfill

{\rm For any $(i,p), (j,r) \in J$, assume that the normalized
$R$-matrix $R_{V(\varpi_{i}), V(\varpi_{j})}^{\text{norm}} (z)$ has
a pole at $z=(-q)^{r-p}$ of order at most $1$. Then the following
statements hold.

\vskip 2mm

(a) The Cartan matrix $A^{J}$ associated with $(J, X, s)$ is of type
$\g_0$.

(b) There exists a quiver isomorphism
$$Q^{\text{rev}} \overset{\sim} \longrightarrow \Gamma^{J}, \quad k \mapsto
\phi^{-1} (\alpha_k, 0) \ \ (k \in I_{0}), $$ where $Q^{\text{rev}}$
is the reverse quiver of $Q$.

(c) The functor ${\mathcal F} : {\rm rep} \,( R^{J}) \rightarrow
{\mathcal C}_{\text{int}}$ is exact and
$${\mathcal F}(S(\alpha_k)) \cong V(\varpi_{i})_{(-q)^{p+h}},$$
where $\phi(i,p)=(\alpha_k, 0)$. }
\end{theorem}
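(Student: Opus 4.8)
The plan is to reduce the entire statement to a computation of the denominators $d_{V(\varpi_a),V(\varpi_b)}(z)$ of the normalized $R$-matrices and then to match the resulting data against the Auslander--Reiten combinatorics encoded by $\phi$. Recall that the quiver $\Gamma^J$ and the Cartan matrix $A^J=(a^J_{ij})$ were defined purely in terms of the orders of zero $d_{ij}$ of $d_{V_{s(i)},V_{s(j)}}(z_j/z_i)$ at the points $z_j/z_i=X(j)/X(i)$. For $i=(a,p)$ and $j=(b,r)$ in $J$ one has $X(j)/X(i)=(-q)^{r-p}$, so $d_{ij}$ is exactly the order of zero of $d_{V(\varpi_a),V(\varpi_b)}(z)$ at $z=(-q)^{r-p}$, equivalently the order of the corresponding pole of the $R$-matrix. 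Hence the standing hypothesis forces $d_{ij}\in\{0,1\}$, and therefore $a^J_{ij}=-d_{ij}-d_{ji}\in\{0,-1,-2\}$ for $i\neq j$.

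First I would import the denominator formulas for the fundamental representations of the simply-laced quantum affine algebras. The precise output needed is the criterion that, for $a\neq b$, the polynomial $d_{V(\varpi_a),V(\varpi_b)}(z)$ can vanish at a point of the form $(-q)^{r-p}$ only when $a$ and $b$ are adjacent in the Dynkin diagram of $\g_0$, and that such a vanishing does occur, at exactly one admissible level difference, in the adjacent case. Combined with the order bound $d_{ij}\le 1$, this yields $d_{ij}+d_{ji}=1$ when $a,b$ are adjacent and $d_{ij}+d_{ji}=0$ otherwise. Consequently $a^J_{ij}\in\{0,-1\}$, and the adjacency pattern of $A^J$ coincides with that of the symmetric, simply-laced Cartan matrix of $\g_0$; this proves (a).

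The content of (b) is to fix the orientation, namely to decide, for adjacent $a,b$, which of $d_{ij}$ or $d_{ji}$ equals $1$. This is the combinatorial heart of the proof. I would argue from the inductive definition of $\phi$ in terms of the Coxeter element $\tau$, the height function $\xi$, and the adaptedness of the fixed reduced word $w_0=s_{i_1}\cdots s_{i_l}$. Tracking how $\phi$ carries the elements $\phi^{-1}(\alpha_a,0)$ and $\phi^{-1}(\alpha_b,0)$ of $J$ back to level zero, and comparing the resulting sign of the level difference $r-p$ with the orientation of $Q$, one shows that the denominator zero lies on the $i\to j$ side precisely when $Q$ has an arrow opposite to $\phi^{-1}(\alpha_a,0)\to\phi^{-1}(\alpha_b,0)$, i.e.\ exactly when $Q^{\text{rev}}$ carries the corresponding arrow. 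This produces the quiver isomorphism $Q^{\text{rev}}\overset{\sim}{\longrightarrow}\Gamma^J$, $k\mapsto\phi^{-1}(\alpha_k,0)$.

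Part (c) is then immediate. Since $\Gamma^J\cong Q^{\text{rev}}$ is of type $A_n$, $D_n$, $E_6$, $E_7$ or $E_8$, the exactness of $\mathcal F:\text{rep}\,(R^J)\to\mathcal C_{\text{int}}$ follows at once from Theorem~\ref{thm:F}(d). For the fundamental representations, write $(i,p)=\phi^{-1}(\alpha_k,0)\in J$; then Theorem~\ref{thm:F}(b) gives $\mathcal F(S(\alpha_k))\cong (V_{s(i,p)})_{X(i,p)}$, and inserting $s(i,p)=V(\varpi_i)$ and $X(i,p)=(-q)^{p+h}$ yields $\mathcal F(S(\alpha_k))\cong V(\varpi_i)_{(-q)^{p+h}}$, as claimed. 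The main obstacle I anticipate is the orientation step: the denominator formulas supply the \emph{locations} of the zeros, but translating the recursion for $\phi$ --- level by level through the action of $\tau$ --- into the precise arrow directions of $Q^{\text{rev}}$, while simultaneously confirming that the simple-pole hypothesis is compatible with the single-edge structure of a Dynkin quiver, is where the genuine work lies.
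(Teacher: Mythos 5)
The paper itself gives no proof of this theorem --- it is quoted from \cite{KKasKim2} --- so I can only assess your argument on its own terms. Your reduction is set up correctly at the start ($d_{ij}$ is the order of the pole of $R^{\mathrm{norm}}_{V(\varpi_a),V(\varpi_b)}$ at $(-q)^{r-p}$, hence $d_{ij}\le 1$ by hypothesis), and part (c) is handled exactly right: once (b) is known, exactness is Theorem \ref{thm:F}(d), and the formula for ${\mathcal F}(S(\alpha_k))$ is Theorem \ref{thm:F}(b) combined with the definitions of $X$ and $s$.

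The gap is in your criterion for (a) and (b). You assert that $d_{V(\varpi_a),V(\varpi_b)}(z)$ can vanish at a point of the form $(-q)^{r-p}$ with $(a,p),(b,r)\in J$ only when $a$ and $b$ are \emph{adjacent in the Dynkin diagram of} $\g_0$, and you only treat $a\neq b$. Both points fail. The vertices of $\Gamma^J$ correspond to simple roots $\alpha_k$ via $k\mapsto\phi^{-1}(\alpha_k,0)$, and the first components of these elements of $J$ need not be distinct, let alone adjacent. Concretely, take $\g_0=A_2$ with $Q=(1\to 2)$ and $\xi_1=1$, $\xi_2=0$: then $\phi(1,1)=(\alpha_1,0)$ and $\phi(1,-1)=(\alpha_2,0)$, so $J=\{(1,1),(1,-1)\}$ and \emph{both} vertices lie in row $a=b=1$. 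The single edge of $\Gamma^J$ (which must exist, since $A^J$ is to be of type $A_2$) comes from the zero of $d_{V(\varpi_1),V(\varpi_1)}(z)=z-q^2$ at $z=(-q)^{2}=(-q)^{r-p}$ --- a case your criterion would assign $d_{ij}+d_{ji}=0$ (or excludes outright). The correct invariant governing adjacency in $\Gamma^J$ is not the Dynkin adjacency of $a$ and $b$ but the pairing of the positive roots $\beta,\gamma$ with $\phi(a,p)=(\beta,0)$, $\phi(b,r)=(\gamma,0)$: the substantive content of the theorem is the identity expressing the pole order of $R^{\mathrm{norm}}_{V(\varpi_a),V(\varpi_b)}$ at $(-q)^{r-p}$ in terms of this Auslander--Reiten datum (so that for $\beta=\alpha_k$, $\gamma=\alpha_{k'}$ one gets $d_{ij}+d_{ji}=-(\alpha_k,\alpha_{k'})$, with the orientation recovering $Q^{\mathrm{rev}}$). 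That identity is the main technical input from \cite{KKasKim2}, and your sketch does not supply it or reduce it to anything you do supply; the ``orientation step'' you flag as the remaining work is in fact inseparable from the adjacency determination you treat as already settled.
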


\begin{remark}
When $\g$ is of type $A_{n}^{(1)}\ (n \ge 1)$ or $D_{n}^{(1)} \
(n\ge 4)$, then the condition in Theorem \ref{thm:g0} is satisfied.
We conjecture that the same is true of $\g = E_{6}^{(1)}, \,
E_{7}^{(1)}, \, E_{8}^{(1)}$.
\end{remark}

\vskip 2mm

We now bring out the main subject of our interest in this section,
the category ${\mathcal C}_{Q}$ (cf. \cite{HL11}). Let ${\mathcal
C}_{Q}$ be the smallest abelian full subcategory of ${\mathcal
C}_{\text{int}}$ such that

\ \ \ (i) ${\mathcal C}_{Q}$ is stable under taking submodules,
subquotients, direct sums and tensor products,

\ \ \ (ii) ${\mathcal C}_{Q}$ contains all $U_{q}'(\g)$-modules of
the form $V(\beta)_{z} \big/ (z-1)^{l} \, V(\beta)_{z}$ \, $(\beta
\in \Delta_{+}, \, l \ge 1)$. Here,
$V(\beta)=V(\varpi_{i})_{(-q)^{p+h}}$ such that $\phi(i,p) = (\beta,
0)$.

Let $\text{Nilrep}\,( R^{J}(\beta))$ be the category of finite
dimensional ungraded $R^{J}(\beta)$-modules such that all $x_k$'s
act nilpotently and set
$$\text{Nilrep}\,( R^{J}):= \bigoplus_{\beta \in Q_{+}}
\text{Nilrep}\,( R^{J}(\beta)).$$ Note that every module in
$\text{Nilrep}\,(R^{J})$ can be obtained by taking submodules,
subquotients, direct sums and convolution products of $P(\alpha_k)
\big/\,(x_{1}^l)$ $(k \in I_{0}, l \ge 0)$, where $P(\alpha_k)$ is
the projective cover of $S(\alpha_k)$. Thus we obtain a well-defined
functor $${\mathcal F}: \text{Nilrep} \, (R^{J}) \longrightarrow
{\mathcal C}_{Q},$$ which satisfies the following properties.

\begin{theorem} {\rm \cite{KKasKim1, KKasKim2}} \label{thm:CQ}
\hfill

{\rm (a) ${\mathcal F}$ is an exact tensor functor.

(b)  ${\mathcal F}$ sends a simple object in $\text{Nilrep}
\,(R^{J})$ to a simple object in ${\mathcal C}_{Q}$. }
\end{theorem}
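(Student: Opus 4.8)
The plan is to derive both assertions from the structural results already established, namely Theorem \ref{thm:F} and Theorem \ref{thm:g0}, together with the finite-type categorification theorem. First I would dispose of exactness and monoidality in (a). By Theorem \ref{thm:g0}(a),(b) the Cartan matrix $A^{J}$ attached to $(J,X,s)$ is of finite type $\g_{0}$ and $\Gamma^{J}$ is isomorphic to the Dynkin quiver $Q^{\text{rev}}$; in particular $\Gamma^{J}$ is of type $A_n$, $D_n$, or $E_{6,7,8}$. Hence Theorem \ref{thm:F}(d) shows that $\mathcal{F}$ is exact, and Theorem \ref{thm:F}(c) supplies the natural isomorphisms $\mathcal{F}(R^{J}(0))\cong\mathbf{k}$ and $\mathcal{F}(M\circ N)\cong\mathcal{F}(M)\otimes\mathcal{F}(N)$. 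Since $\text{Nilrep}(R^{J})$ is closed under convolution (as recorded in the text preceding the statement), these properties restrict to $\text{Nilrep}(R^{J})$, giving the exact tensor structure of part (a).

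To confirm that $\mathcal{F}$ indeed lands in $\mathcal{C}_{Q}$, I would evaluate it on generators. Every object of $\text{Nilrep}(R^{J})$ is obtained from the modules $P(\alpha_k)/(x_1^{l})$ $(k\in I_0,\ l\ge 0)$ by submodules, subquotients, direct sums, and convolution products. I claim $\mathcal{F}(P(\alpha_k)/(x_1^{l}))\cong V(\beta)_{z}/(z-1)^{l}V(\beta)_{z}$, where $V(\beta)=V(\varpi_i)_{(-q)^{p+h}}$ and $\phi(i,p)=(\alpha_k,0)$. The case $l=1$ is precisely Theorem \ref{thm:g0}(c), since $P(\alpha_k)/(x_1)=S(\alpha_k)$; for general $l$ one uses that in the completed tensor space $\widehat{V}^{\otimes\beta}$ the operator $x_1$ acts as the local coordinate $z-X(\alpha_k)$ at the completion point, so that quotienting by $x_1^{l}$ produces exactly the $l$-th infinitesimal thickening appearing in condition (ii) of the definition of $\mathcal{C}_{Q}$. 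As $\mathcal{C}_{Q}$ is closed under the same four operations, the exact monoidal functor $\mathcal{F}$ carries all of $\text{Nilrep}(R^{J})$ into $\mathcal{C}_{Q}$.

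For (b) I would pass to Grothendieck rings. Because $A^{J}$ is of finite type, $R^{J}(\beta)$ has only finitely many simple modules, and the dual form of Theorem \ref{thm:categorification} (in the symmetric case, by Varagnolo-Vasserot) identifies the classes of the graded simples of $\bigoplus_{\beta}\text{Rep}(R^{J}(\beta))$ with the upper global basis of the dual integral form of $U_{q}^{-}(\g_0)$. On the quantum affine side the simples of $\mathcal{C}_{Q}$ are indexed by the same dual canonical basis. The exact monoidal $\mathcal{F}$ induces a ring homomorphism between these Grothendieck rings which, by part (a) and by $\mathcal{F}(S(\alpha_k))\cong V(\varpi_i)_{(-q)^{p+h}}$, is an isomorphism matching the distinguished generators; I would then check that it intertwines the two dual canonical bases, so that each simple $S$ has $[\mathcal{F}(S)]$ equal to a single basis element.

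The main obstacle is to upgrade this Grothendieck-ring statement to the genuine simplicity of $\mathcal{F}(S)$: knowing that the class is a basis element does not by itself rule out $\mathcal{F}(S)$ being decomposable or having a nontrivial radical. The tool I would use is a compatibility between R-matrices on the two sides, namely that $\mathcal{F}$ intertwines the intertwiners built from the $\tau$-operators of $R^{J}$ with the normalized R-matrices $R^{\text{norm}}$ of $U_q'(\g)$. Realizing a simple $S$ as the head of a convolution $S_{\gamma_1}\circ\cdots\circ S_{\gamma_r}$ of cuspidal modules, ordered by the convex order determined by $Q$, this compatibility lets one transport the head to the head of $V(\beta_1)\otimes\cdots\otimes V(\beta_r)$ and conclude that it is simple in $\mathcal{C}_{Q}$. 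Establishing this R-matrix compatibility and the requisite cuspidal factorization is where the substantive work resides.
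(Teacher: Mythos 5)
The paper itself gives no proof of Theorem \ref{thm:CQ}: it is quoted from \cite{KKasKim1, KKasKim2}, and the only in-text justification is the remark that every object of $\text{Nilrep}\,(R^{J})$ is built from the modules $P(\alpha_k)\big/(x_1^{l})$ by submodules, subquotients, direct sums and convolutions, which is exactly the point you use to see that $\mathcal{F}$ lands in $\mathcal{C}_{Q}$. Your treatment of part (a) is the right derivation from the results quoted in the survey (Theorem \ref{thm:g0}(a),(b) to identify $\Gamma^{J}$ with a Dynkin quiver, then Theorem \ref{thm:F}(c),(d)), with one caveat you should make explicit: Theorem \ref{thm:g0} carries the hypothesis that the poles of $R^{\mathrm{norm}}_{V(\varpi_i),V(\varpi_j)}$ at the relevant points have order at most $1$, which the subsequent Remark says is verified only for types $A$ and $D$ and conjectural for $E_{6,7,8}$; your argument inherits that hypothesis.

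The genuine gap is in part (b), and it is the one you half-acknowledge. The Grothendieck-ring argument cannot, even in principle, yield simplicity: knowing that $[\mathcal{F}(S)]$ is a ``dual canonical basis element'' of $K(\mathcal{C}_{Q})$ says nothing about whether $\mathcal{F}(S)$ is simple, and moreover the intermediate claim that $\mathcal{F}$ matches the upper global basis of $U_q^-(\g_0)^\vee$ with the classes of simples in $\mathcal{C}_{Q}$ is itself a statement at the level of the Hernandez--Leclerc quantum Grothendieck ring comparison --- it is not available from anything quoted in this paper, and in \cite{KKasKim1, KKasKim2} it is a \emph{consequence} of Theorem \ref{thm:CQ}(b), not an input to it. The actual mechanism is the one you defer to your last paragraph: the compatibility of $\mathcal{F}$ with renormalized $R$-matrices (the $\tau$-intertwiners on the KLR side versus $R^{\mathrm{norm}}$ on the quantum affine side), the realization of each simple as the simple head of an ordered convolution of cuspidal modules, and the fact that $\mathcal{F}$ sends cuspidals to the modules $V(\beta)$ whose ordered tensor product again has simple head. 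That is not a finishing touch but the entire content of part (b); as written, your proposal establishes (a) but only reduces (b) to the theorem it is supposed to prove.
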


It is straightforward to verify that ${\mathcal F}$ is a faithful
functor. Since ${\mathcal C}_{Q}$ is the smallest abelian full
subcategory of ${\mathcal C}_{\text{int}}$ satisfying the conditions
(i) and (ii) given above, we conjecture that ${\mathcal F}$ is full
and defines an equivalence of categories.

\begin{remark}
Note that our general approach to quantum affine Schur-Weyl duality
applies to {\it all} quantum affine algebras and {\it any} choice of
good modules. Thus we expect there are a lot more exciting
developments to come.  It is an interesting question whether our
general construction can shed a new light on the hidden connection
between quantum affine algebras and cluster algebras (cf.
\cite{HL10}).

\end{remark}

\vskip 5mm


\begin{thebibliography}{7}

\bibitem{AK} T. Akasaka, M. Kashiwara,
{\em Finite-dimensional representations of quantum affine algebras},
Publ. RIMS. Kyoto Univ. {\bf33} (1997), 839-867.

\bibitem{Ar} S. Ariki, {\em On the decomposition numbers of the
Hecke algebra of $G(M,1,n)$}, J. Math. Kyoto Univ. {\bf36} (1996),
789-808.



\bibitem{CP94} V. Chari, A. Pressley,
{\em A guide to Quantum Groups}, Cambridge University Press,
Cambridge, 1994.


\bibitem{CP96} V. Chari, A. Pressley,
{\em Quantum affine algebras and affine Hecke algebras}, Pacific J.
Math. {\bf 174} (1996), 295-326.



\bibitem{Che}  I. V. Cherednik,
{\em A new interpretation of Gelfand-Tzetlin bases}, Duke Math. J.
{\bf54} (1987), 563-577.

\bibitem{CR08}
J. Chuang, R. Rouquier, \emph{Derived equivalences for symmetric
groups and ${\mathfrak sl}_{2}$-categorification}, Ann. Math.
\textbf{167} (2008), 245--298.

\bibitem{CF94}
L. Crane, I. B. Frenkel, \emph{Four-dimensional topological quantum
field theory, Hopf categories, and the canonical bases}, J. Math.
Phys. {\bf 35} (1994), 5136-5154.

\bibitem{EKL}
A. Ellis, M. Khovanov, A. Lauda, \emph{The odd nilHecke algebra and
its diagrammatics}, Int. Math. Res. Notices {\bf 2014-4} (2014),
991-1062.


\bibitem{EL}
A. Ellis, A. Lauda, \emph{An odd categorification of quantum
$sl(2)$}, arXiv:1307.7816.


\bibitem{GRV94} V. Ginzburg , N. Reshetikhin.  E. Vasserot,
{\em Quantum groups and flag varieties}, Contemp. Math. {\bf 175}
(1994), 101-130.


\bibitem{HK02}
J.~Hong, S.-J. Kang, \emph{Introduction to Quantum Groups and
Crystal Bases}, Graduate Studies in Mathematics \textbf{42},
American Mathematical Society, Provodence, 2002.

\bibitem{HL10}
D. Hernandez, B. Leclerc, {\em Cluster algebras and quantum affine
algebras}, Duke Math. J. {\bf 154} (2010), 265-341.

\bibitem{HL11}
D. Hernandez, B. Leclerc,  \emph{Quantum Grothendieck rings and
derived Hall algebras}, to appear in J. reine angew. Math., DOI:
10.1515/crelle-2013-0020.

\bibitem{HW} D. Hill, W. Wang, \emph{Categorification of quantum Kac-Moody
superalgebras}, to appear in Trans. Amer. Math. Soc.,
arXiv:1202.2769.

\bibitem{Jimbo86} M. Jimbo, {\em A $q$-analogue of
$U(\mathfrak{gl}_{N+1})$, Hecke algebra, and the Yang-Baxter
equation}, Lett. Math.Phys. {\bf11}  (1986), 247-252.

\bibitem{Kac}
V. Kac, {\em Infinite Dimensional Lie algebras}, 3rd ed., Cambridge
University Press, Cambridge, 1990.


\bibitem{KKas12} S.-J. Kang, M. Kashiwara,
{\em Categorification of highest weight modules via Khovanov-
Lauda-Rouquier algebras}, Invent. Math. {\bf190} (2012), 699--742.


\bibitem{KKasKim1} S.-J. Kang, M. Kashiwara, M. Kim,
\emph{Symmetric quiver Hecke algebras and R-matrices of quantum
affine algebras}, arXiv:1304.0323.

\bibitem{KKasKim2} S.-J. Kang, M. Kashiwara,  M. Kim,
\emph{Symmetric quiver Hecke algebras and R-matrices of quantum
affine algebras II}, arXiv:1308.0651.

\bibitem{KKO}
S.-J. Kang, M. Kashiwara, S.-j. Oh, \emph{Categorification of
highest weight modules over quantum generalized Kac-Moody algebras},
Moscow Math. J. {\bf 13} (2103), 315--343.

\bibitem{KKO1}
S.-J. Kang, M. Kashiwara, S.-j. Oh, \emph{Supercategorification of
quantum Kac-Moody algebras}, Adv. Math. {\bf 242} (2013), 116--162.

\bibitem{KKO2}
S.-J. Kang, M. Kashiwara, S.-j. Oh, \emph{Supercategorification of
quantum Kac-Moody algebras II}, arXiv:1303.1916.

\bibitem{KKP}
S.-J. Kang, M. Kashiwra, E. Park, \emph{Geomtric realization of
Khovanov-Lauda-Rouquier algebras associated with Borcherds-Cartan
data}, Proc. London Math. Soc {\bf (3) 107} (2013), 907-931.

\bibitem{KKT} S.-J.~Kang, M.~Kashiwara, S.~Tsuchioka,
\emph{Quiver Hecke superalgebras}, to appear in J. reine  angew.
Math.

\bibitem{KOP}
S.-J. Kang, S.-j. Oh, E. Park, \emph{Categorification of quantum
generalized Kac-Moody algebras and crystal bases}, Int. J. Math.
{\bf 23} (2012), 1250116.

\bibitem{Kas91}
M. Kashiwara, {\em On crystal bases of the $q$-analogue of universal
enveloping algebras}, Duke Math. J. {\bf 63} (1991), 465--516.

\bibitem{Kas02}
M. Kashiwara, {\em On level zero representations of quantized affine
algebras}, Duke. Math. J. {\bf112} (2002), 117--175.


\bibitem{KL09}
M.~Khovanov,  A. Lauda, \emph{A diagrammatic approach to
categorification of quantum groups
  {I}}, Represent. Theory \textbf{13} (2009), 309--347.

\bibitem{KL11}
M.~Khovanov,  A. Lauda, \emph{A diagrammatic approach to
categorification of
  quantum groups {II}}, Trans. Amer. Math. Soc. \textbf{363} (2011),
  2685--2700.


\bibitem{LLT}
A.~Lascoux, B.~ Leclerc, J-Y. Thibon, \emph{Hecke algebras at roots
of unity and crystal bases of quantum affine algebras}, Commun.
Math. Phys. \textbf{181} (1996), 205--263.


\bibitem{Lus90}
G.~Lusztig, \emph{Canonical bases arising from quantized enveloping
algebras}, J. Amer. Math. Soc. \textbf{3} (1990), 447--498.

\bibitem{Lus93} G. Lusztig,
{\em Introduction to Quantum Groups}, Birkh\"{a}user, Boston, 1993.



\bibitem{R08}
R.~Rouquier, \emph{2-{K}ac-{M}oody algebras}, arXiv:0812.5023.

\bibitem{R11}
R.~Rouquier, {\em Quiver Hecke algebras and 2-Lie algebras},
arXiv:1112.3619.

\bibitem{S1} I. Schur, \newblock{\it \"{U}ber Eine Klasse Von Matrizen, die Sich Einer Gegeben Matrix
Zurodenen Lassen}, Ph.D. thesis (1901). Reprinted in Gesamelte
Abhandlungen {\bf 1}, 1--70.

\bibitem{S2} I. Schur, \newblock{\it \"{U}ber die rationalen Darstellungen der allgemeinen linearen Gruppe},
Sitzungsberichte der K\"{o}niglich Preussischen Adademie der
Wissenschaften zu Berlin (1927), 58--75. Reprinted in Gessamelte
Abhandlungen {\bf 3}, 68--85.

\bibitem{VV09}
M. Varagnolo, E. Vasserot, \emph{Canonical bases and KLR algebras},
J. reine angew. Math. \textbf{659} (2011), 67--100.


\bibitem{Wang}
W. Wang, \emph{Spin Hecke algebras of finite and affine types}, Adv.
Math. {\bf 212} (2007), 723--748.

\bibitem{Web} Ben Webster, \emph{Knot invariants and higher representation
theory}, arXiv:1309.3796.


\end{thebibliography}
\end{document}